\newcommand{\define}{\textbf}
\newcommand{\C}{\mathds{C}}
\newcommand{\Q}{\mathds{Q}}
\newcommand{\Z}{\mathds{Z}}
\renewcommand{\P}{\mathds{P}}
\renewcommand{\O}{\mathcal{O}}
\newcommand{\bq}{q}
\newcommand{\bQ}{Q}
\newcommand{\pt}{\mathrm{pt}}
\DeclareMathOperator{\sh}{sh}
\DeclareMathOperator{\Inj}{Inj}
\newcommand{\Sgp}{\mathcal{S}}        
\newcommand{\LambdaXi}{\Lambda^{(\xi)}}  
\newcommand{\TZ}{{T_\Z}}     
\newcommand{\Tn}{{T}}        
\newcommand{\tto}{\twoheadrightarrow}
\newcommand{\bull}{ {\footnotesize{\ensuremath{\bullet}}}  }
\newcommand{\Fl}{ {Fl} }
\newcommand{\sGr}{\mathrm{Gr}}  
\newcommand{\sFl}{\mathrm{Fl}}
\newcommand{\aGr}{\tilde{\sGr}}  
\newcommand{\aFl}{\tilde{\sFl}}
\newcommand{\tS}{\mathds{S}}      
\newcommand{\ee}{\mathrm{e}}    
\newcommand{\isom}{\cong}
\renewcommand{\phi}{\varphi}
\renewcommand{\tilde}{\widetilde}
\renewcommand{\hat}{\widehat}
\renewcommand{\bar}{\overline}
\newtheoremstyle{scthm}%
{}{}{\itshape}{}{\scshape}{.}{ }{}
\newtheoremstyle{scdef}%
{}{}{}{}{\scshape}{.}{ }{}
\theoremstyle{scthm}
\newtheorem{theorem}{Theorem}[section]
\newtheorem{lemma}[theorem]{Lemma}
\newtheorem{proposition}[theorem]{Proposition}
\newtheorem{corollary}[theorem]{Corollary}
\newtheorem*{thm*}{Theorem}
\newtheorem*{cor*}{Corollary}
\newtheorem*{prop*}{Proposition}
\newtheorem*{corA}{Corollary~A}
\newtheorem*{corB}{Corollary~B}
\theoremstyle{scdef}
\newtheorem{definition}[theorem]{Definition}
\newtheorem{remark}[theorem]{Remark}
\begin{document}

\title[Equivariant cohomology of affine Grassmannians]{Integral equivariant cohomology of affine Grassmannians}
\author{David Anderson}
\thanks{Partially supported by NSF CAREER DMS-1945212.}

\date{May 30, 2023}
\address{Department of Mathematics, The Ohio State University, Columbus, OH 43210}
\email{anderson.2804@math.osu.edu}

\maketitle

\renewcommand{\bfseries}{\itshape}

\begin{abstract}
We give explicit presentations of the integral equivariant cohomology of the affine Grassmannians and flag varieties in type A, arising from their natural embeddings in the corresponding infinite (Sato) Grassmannian and flag variety.  These presentations are compared with results obtained by Lam and Shimozono, for rational equivariant cohomology of the affine Grassmannian, and by Larson, for the integral cohomology of the moduli stack of vector bundles on $\P^1$.
\end{abstract}

\section{Introduction}\label{s.intro}

The main aim of this note is to provide a simple presentation, in terms of generators and relations, of the torus-equivariant cohomology of the affine Grassmannian and flag variety, $\aGr_n$ and $\aFl_n$.  In particular, we obtain these rings as quotients of polynomial rings, with the quotient map arising geometrically as the pullback via embeddings in the Sato Grassmannian and flag variety, respectively.

Let $\Lambda = \Z[c_1,c_2,\ldots]$ be the polynomial ring in countably many generators, with $c_i$ in degree $2i$.  Let $p_k=p_k(c)$ be the polynomial
\begin{equation}\label{e.Newton1}
  p_k(c) = (-1)^{k-1} \det \left(\begin{array}{ccccc}
  c_1 & 1 & 0 & 0 & 0 \\
  2c_2 & c_1 & 1 & 0 & 0 \\
  3c_3 & c_2 & \ddots & \ddots & 0 \\
  \vdots & \vdots & \ddots & \ddots & 1 \\kc_k & c_{k-1} & \cdots & c_2 & c_1
  \end{array}\right).
\end{equation}
One can identify $\Lambda$ with the ring of symmetric functions in some other set of variables, making $c_k$ the complete homogeneous symmetric function, so that $p_k$ becomes the power sum symmetric function via the Newton relations.  But for now we remain agnostic about the choice of such an identification.

Fixing $n$, consider the polynomials
\begin{align}
 p_k(c|y) &= p_k(c) + p_{k-1}(c)\, e_1(y_1,\ldots,y_n) + \cdots \\ &\qquad + p_2(c)\, e_{k-2}(y_1,\ldots,y_n) + p_1(c)\, e_{k-1}(y_1,\ldots,y_n) \nonumber \\
    &= \sum_{i=1}^k p_{i}(c)\, e_{k-i}(y_1,\ldots,y_n) \label{e.pk-def}
\end{align}
in $\Lambda[y_1,\ldots,y_n]$.

Let $V$ be a complex vector space with basis $\ee_i$, for $i\in\Z$, and let $V_{\leq0}$ be the subspace spanned by $\ee_i$ for $i\leq 0$.  The torus $\Tn=(\C^*)^n$ acts by scaling the basis vector $\ee_i$ by the character $y_{{i\pmod n}}$, using representatives $1,\ldots,n$ for residues mod $n$.  Let $\sGr^d=\sGr^d(V)$ be the corresponding Sato Grassmannian parameterizing subspaces of index $d$, with the induced action of $\Tn$.  The $d$-th component of the affine Grassmannian embeds $\Tn$-equivariantly in $\sGr^d$.  (Definitions of these spaces are reviewed in \S\ref{s.bg} below.)  
We write $\tS_d \subset V$ for the tautological bundle on $\sGr^d$, and recycle the same notation for the tautological bundle on subvarieties, when the context is clear.

The equivariant cohomology of the Sato Grassmannian is $H_{\Tn}^*\sGr^d = \Lambda[y_1,\ldots,y_n]$, identifying $c_k$ with the Chern class $c_k^{\Tn}(V_{\leq 0} -\tS_d)$.  

\begin{thm*}
The inclusion $\aGr^d_n \hookrightarrow \sGr^d$ induces a surjection $H_{\Tn}^*\sGr^d \tto H_{\Tn}^*\aGr^d_n$, whose kernel is generated by $p_k(c|y)$ for $k> n$, together with $p_n(c|y)+d e_n(y)$.

In particular, the map $c_k \mapsto c_k^{\Tn}(V_{\leq 0}-\tS_d)$ defines an isomorphism of $H_{\Tn}^*(\pt) = \Z[y_1,\ldots,y_n]$-algebras
\[
  \Lambda[y_1,\ldots,y_n]/I^d_n \xrightarrow{\sim} H_{\Tn}^*(\aGr^d_n)  ,
\]
where $I_n^d$ is the ideal generated by $p_k(c|y)$ for $k> n$ and $p_n(c|y)+d e_n(y)$.
\end{thm*}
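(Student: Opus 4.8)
The plan is to show that the induced map $\bar\phi\colon \Lambda[y_1,\dots,y_n]/I^d_n \to H_{\Tn}^*\aGr^d_n$ is an isomorphism by combining a fixed-point computation (to see that the proposed relations really vanish) with a comparison of free $\Z[y]$-modules. Both $\sGr^d$ and $\aGr^d_n$ carry $\Tn$-invariant affine pavings, so they are equivariantly formal and their equivariant cohomology injects into the product of $H_{\Tn}^*(\pt)=\Z[y]$ over the isolated $\Tn$-fixed points, with the pullback $\phi$ compatible with restriction to fixed points. The fixed points of $\sGr^d$ are the coordinate subspaces $W_S=\Span(\ee_i:i\in S)$ for $S\subset\Z$ of index $d$, and those of $\aGr^d_n$ are exactly the $z$-stable ones, where $z\colon\ee_i\mapsto\ee_{i-n}$ is the degree-$0$, $\Tn$-equivariant shift; such periodic $S$ are recorded by integers $\nu_1,\dots,\nu_n$, one threshold per residue class, with $\sum_r\nu_r=\mp d$.

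First I would verify $I^d_n\subseteq\ker\phi$. Writing $P(t)=\sum_{i\ge1}p_i(c)\,t^i$ and $E(t)=\prod_{l=1}^n(1+y_l t)=\sum_{j\ge0}e_j(y)t^j$, the definition \eqref{e.pk-def} is the generating-function identity $\sum_{k\ge1}p_k(c|y)\,t^k=P(t)\,E(t)$. Since power sums are additive, restriction to $W_S$ gives $p_i(c)|_{W_S}=\sum_{r=1}^n\nu_r\,(-y_r)^i$, so $P(t)|_{W_S}=\sum_r\nu_r\frac{-y_r t}{1+y_r t}$. The crucial point, and the only place $z$-stability enters, is that at a periodic $S$ each pole factor $(1+y_r t)^{-1}$ is cancelled by a factor of $E(t)$, because $z$ shifts by $n$ and the characters are $n$-periodic:
\[
  \sum_{k\ge1}p_k(c|y)\big|_{W_S}\,t^k \;=\; \sum_{r=1}^n \nu_r\,(-y_r t)\prod_{l\ne r}(1+y_l t),
\]
a polynomial in $t$ of degree $\le n$. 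Reading off coefficients, $p_k(c|y)|_{W_S}=0$ for $k>n$, while the degree-$n$ coefficient is $\mp e_n(y)\sum_r\nu_r=\mp d\,e_n(y)$; with the sign conventions of the setup this is $-d\,e_n(y)$, so $p_n(c|y)+d\,e_n(y)$ also restricts to $0$. By injectivity into the fixed points, the generators of $I^d_n$ lie in $\ker\phi$, so $\phi$ descends to $\bar\phi$.

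It then remains to prove $\bar\phi$ is bijective, which I would do by exhibiting both sides as free $\Z[y]$-modules of the same graded rank. On the target, the affine paving shows $H_{\Tn}^*\aGr^d_n$ is free over $\Z[y]$ with basis indexed by the $z$-stable fixed points, so its Poincaré series is $\prod_{i=1}^{n-1}(1-t^{2i})^{-1}$. For the source, I claim $\{\,s_\lambda(c):\lambda_1\le n-1\,\}$ spans $M:=\Lambda[y]/I^d_n$ over $\Z[y]$. The mechanism is that, by Murnaghan--Nakayama, $p_k(c)$ is a signed sum of hook Schur functions whose unique top term is $s_{(k)}(c)$ with coefficient $+1$; hence each relation reads $p_k(c|y)=s_{(k)}(c)+\sum_{\mu_1<k}(\Z[y])\,s_\mu(c)$, so one may rewrite $s_{(k)}(c)=c_k$ for $k\ge n$ \emph{integrally} in terms of Schur functions of strictly smaller largest part, and a reduction on largest part expresses every $s_\mu(c)$ through the proposed basis. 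By graded Nakayama with respect to $(y_1,\dots,y_n)$, it suffices to check the spanning, and the resulting freeness and Poincaré series, after setting $y=0$: that is, to show $\Lambda/(p_n(c),p_{n+1}(c),\dots)$ is a free abelian group with basis $\{s_\lambda:\lambda_1\le n-1\}$.

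Finally, granting surjectivity of $\bar\phi$ (which reduces, again by Nakayama, to the non-equivariant statement that the tautological classes $c_k$ generate $H^*\aGr^d_n$), a surjection of free $\Z[y]$-modules with equal finite graded ranks is an isomorphism: reduce modulo $(y)$ to surjections of finite free abelian groups of equal rank in each degree, which are isomorphisms, and lift by graded Nakayama. I expect the main obstacle to be precisely the integrality of the previous paragraph. Over $\Q$ the relations trivially cut $\Lambda_\Q[y]$ down to $\Q[p_1,\dots,p_{n-1}][y]$, but over $\Z$ the leading coefficient $k$ of $p_k(c)$ prevents one from solving for $c_k$, and the genuine content is that $\Lambda/(p_n,p_{n+1},\dots)$ is torsion-free with the Schur basis above --- an integral, divided-power phenomenon mirroring the ring structure of $H^*(\Omega SU(n))$.
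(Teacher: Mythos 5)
Your route is genuinely different from the paper's. Instead of Bott's symmetrization construction --- which maps $H_{\Tn}^*\aGr_n$ into $\Sgp H_{\Tn}^*\P^{n-1}=\varprojlim_r\big(H_{\Tn}^*(\P^{n-1})^{\times r}\big)^{\Sgp_r}$ via the coproduct and computes that target ring outright --- you verify the relations by restriction to $\Tn$-fixed points and then close the argument with a rank comparison of free graded $\Z[y]$-modules. The localization computation is correct, and it is in fact a clean, uniform way to see why the anomalous generator $p_n(c|y)+d\,e_n(y)$ appears for general $d$; the paper instead obtains this by transporting the $d=0$ presentation through the shift automorphism $\gamma^d$. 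Your closing step (a surjection of free graded $\Z[y]$-modules with equal finite rank in each degree is an isomorphism) is fine, and surjectivity is, as in the paper, ultimately charged to Bott's non-equivariant theorem via graded Nakayama. The price of avoiding the coproduct is that you need injectivity of restriction to fixed points over $\Z$, which is an extra (though standard) input the paper does not use.

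The genuine gap is exactly where you suspect it: the claim that $\{s_\lambda(c):\lambda_1\le n-1\}$ spans $\Lambda/(p_k)_{k\ge n}$ over $\Z$. The mechanism you propose --- rewrite $s_{(k)}$, $k\ge n$, in terms of Schur functions of smaller largest part via Murnaghan--Nakayama, then ``reduce on largest part'' --- does not close up. To reduce a general $s_\mu$ with $\mu_1=k\ge n$ one writes $s_\mu=h_k s_{\hat\mu}-\sum_\nu s_\nu$ by Pieri, but after substituting the relation $s_{(k)}\equiv\sum_{\rho_1<k}a_\rho s_\rho$ the products $s_\rho\, s_{\hat\mu}$ again produce Schur functions with first part as large as $\rho_1+\mu_2\ge k$, so the induction on the largest part is circular. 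The paper supplies the missing integral input by working in the monomial basis instead: its Lemma in \S3 shows $(p_k)_{k\ge n}=(m_\lambda)_{\lambda_1\ge n}=(\xi_1^n,\xi_2^n,\ldots)\cap\LambdaXi$ (proved by induction on the \emph{number of parts}, which does terminate), whence the quotient is visibly free with basis $\{m_\lambda\}_{\lambda_1<n}$ --- equivalently, by unitriangularity of the Kostka matrix, with basis $\{s_\lambda\}_{\lambda_1\le n-1}$. With that lemma inserted in place of your sketch, your argument goes through.
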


All the generators of $I_n^d$ are symmetric in the $y$ variables.  It follows that the $GL_n$-equivariant cohomology has essentially the same presentation.  Write $H_{GL_n}^*(\pt) = \Z[e_1,\ldots,e_n]$, with $e_k$ in degree $2k$, regarded as a subring of $H_{\Tn}^*(\pt)$ by sending $e_k$ to the elementary symmetric polynomial $e_k(y)$.  Define elements $p_k(c|e) \in \Lambda[e_1,\ldots,e_n]$ by the same formula \eqref{e.pk-def}, with $e_k=0$ for $k>n$.

\begin{corA}
Let $J_n^d \subset \Lambda[e_1,\ldots,e_n]$ be the ideal generated by $p_k(c|e)$ for $k>n$ and $p_n(c|e)+ de_n$.  Then the map $c_k\mapsto c_k^{GL_n}(V_{\leq 0} - \tS_d)$ defines an isomorphism of $H_{GL_n}^*(\pt)$-algebras
\[
  \Lambda[e_1,\ldots,e_n]/J^d_n \xrightarrow{\sim} H_{GL_n}^*(\aGr^d_n)  .
\]
\end{corA}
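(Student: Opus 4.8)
The plan is to deduce Corollary A from the Theorem by \emph{faithfully flat base change} along the inclusion of symmetric polynomials $H^*_{GL_n}(\pt) \hookrightarrow H^*_{\Tn}(\pt)$. Write $\Z[e] = \Z[e_1,\ldots,e_n]$ and $\Z[y] = \Z[y_1,\ldots,y_n]$. The two structural facts I would use are that $\Z[y]$ is free of rank $n!$ over $\Z[e] = \Z[y]^{S_n}$ (a classical fact; e.g.\ the staircase monomials give a basis descending to a $\Z$-basis of the coinvariant ring), hence \emph{faithfully flat} over $\Z[e]$, and correspondingly that $\Lambda[y_1,\ldots,y_n] = \Lambda[e_1,\ldots,e_n]\otimes_{\Z[e]}\Z[y]$ is free over $\Lambda[e_1,\ldots,e_n]$.

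First I would match the two ideals. Because there are only $n$ variables $y_i$, the polynomials $e_j(y)$ vanish for $j > n$, so the inclusion $\Z[e]\hookrightarrow\Z[y]$ carries $p_k(c|e) = \sum_{i=1}^k p_i(c)\,e_{k-i}$ to $\sum_{i=1}^k p_i(c)\,e_{k-i}(y) = p_k(c|y)$, and $p_n(c|e) + de_n$ to $p_n(c|y) + de_n(y)$. Thus the generators of $J_n^d$ extend to those of $I_n^d$, so $I_n^d = J_n^d\cdot\Lambda[y_1,\ldots,y_n]$, and by right exactness of the tensor product $(\Lambda[e_1,\ldots,e_n]/J_n^d)\otimes_{\Z[e]}\Z[y] \cong \Lambda[y_1,\ldots,y_n]/I_n^d$.

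The geometric input, which is the step I expect to require the most care, is the \emph{integral} change-of-groups isomorphism $H^*_{GL_n}(\aGr^d_n)\otimes_{\Z[e]}\Z[y] \cong H^*_{\Tn}(\aGr^d_n)$. I would obtain this from the fibre bundle $\aGr^d_n\times_{\Tn}EGL_n \to \aGr^d_n\times_{GL_n}EGL_n$ with fibre $GL_n/\Tn$: the equivariant Chern roots $y_1,\ldots,y_n$ restrict to ring generators of $H^*(GL_n/\Tn)$, which is a free abelian group concentrated in even degrees, so the Leray--Hirsch theorem applies over $\Z$ and presents $H^*_{\Tn}(\aGr^d_n)$ as a free $H^*_{GL_n}(\aGr^d_n)$-module on the coinvariant monomials, compatibly with $\Z[y] = \Z[e]\otimes H^*(GL_n/\Tn)$. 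This is precisely the point that must be handled integrally: rationally one would simply take $S_n$-invariants, but over $\Z$ it is the torsion-freeness and even vanishing of $H^*(GL_n/\Tn)$ that licenses Leray--Hirsch in place of the averaging argument.

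Finally I would place the candidate map $c_k \mapsto c_k^{GL_n}(V_{\leq 0} - \tS_d)$ at the top of a commutative square whose vertical arrows are $-\otimes_{\Z[e]}\Z[y]$. Its base change is the map $c_k \mapsto c_k^{\Tn}(V_{\leq 0} - \tS_d)$ of the Theorem, since $GL_n$-equivariant Chern classes restrict to $\Tn$-equivariant ones; by the two previous steps the columns identify the base changes of the top objects with $\Lambda[y_1,\ldots,y_n]/I_n^d$ and $H^*_{\Tn}(\aGr^d_n)$. Since the bottom arrow is the isomorphism of the Theorem and $\Z[y]$ is faithfully flat over $\Z[e]$ (so a map of $\Z[e]$-modules is an isomorphism if and only if it is one after $\otimes_{\Z[e]}\Z[y]$), the top arrow is an isomorphism, which is exactly Corollary A.
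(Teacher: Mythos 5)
Your argument is correct, and it reaches Corollary~A by a route that is organized differently from the paper's. The paper disposes of the corollary in one sentence, citing the general fact that $H_{GL_n}^*X \subset H_{\Tn}^*X$ is the ring of $\Sgp_n$-invariants in the $y$-variables and noting that all generators of $I_n^d$ are symmetric; implicitly one then takes invariants of the presentation $\Lambda[y]/I_n^d \xrightarrow{\sim} H_{\Tn}^*\aGr_n^d$. The integral content of that citation is exactly the structural fact you establish via Leray--Hirsch for the fibration with fibre $GL_n/\Tn$, namely that $H_{\Tn}^*X$ is free over $H_{GL_n}^*X$ compatibly with $\Z[y]=\Z[e]\otimes_{\Z} H^*(GL_n/\Tn)$; so the two arguments rest on the same geometric input. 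Where you genuinely differ is in the descent step: instead of taking $\Sgp_n$-invariants of the Theorem's isomorphism (which over $\Z$ requires knowing that invariants commute with the quotient by $I_n^d$ --- not automatic without averaging, though it does follow from the same freeness), you base-change up to $\Z[y]$ and descend by faithful flatness. This buys a cleaner integral argument that never mentions invariants, at the cost of having to verify the change-of-groups isomorphism yourself rather than quoting it. Your identification $I_n^d=J_n^d\cdot\Lambda[y]$, using that $e_j(y)=0$ for $j>n$, is exactly the point the paper flags when it observes that the generators of $I_n^d$ are symmetric. Two small points worth making explicit: Leray--Hirsch should be applied to the finite-dimensional pieces of the ind-variety $\aGr_n^d$ and then passed to the limit (or one can use that everything in sight is free over $\Z[y]$ with no odd cohomology); and to know the top arrow of your square is well defined on the quotient by $J_n^d$, note that $H_{GL_n}^*\aGr_n^d\to H_{\Tn}^*\aGr_n^d$ is a split injection of $\Z[e]$-modules (the basis of the free module contains $1$), so relations verified after base change already hold upstairs.
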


This follows from the theorem by an application of the general fact that $H_{GL_n}^*X \subset H_{\Tn}^*X$ is the invariant ring for the natural $\Sgp_n$ action on $y$ variables; see, e.g., \cite[\S15.6]{ecag}.

A presentation for the equivariant cohomology of $\aFl_n$ also follows from the theorem.  Let $\tS_\bull: \cdots \subset \tS_{-1}\subset \tS_0 \subset \tS_1 \subset \cdots$ be the tautological flag on $\aFl_n$.

\begin{corB}
Evaluating $c_k \mapsto c_k^{\Tn}(V_{\leq0}-\tS_0)$ and $x_i\mapsto c_1^{\Tn}(\tS_i/\tS_{i-1})$, we have
\[
  H_{\Tn}^*\aFl_n = \Lambda[x_1,\ldots,x_n,y_1,\ldots,y_n]/I^{\sFl}_n,
\]
where $I^{\sFl}_n$ is generated by $p_k(c|y)$ for $k\geq n$ along with $e_i(x)-e_i(y)$ for $i=1,\ldots,n$.

For $GL_n$-equivariant cohomology, the presentation is similar:
\[
  H_{GL_n}^*\aFl_n = \Lambda[x_1,\ldots,x_n,e_1,\ldots,e_n]/J^{\sFl}_n,
\]
where $J^{\sFl}_n$ is generated by $p_k(c|e)$ for $k\geq n$ along with $e_i(x)-e_i$ for $i=1,\ldots,n$.
\end{corB}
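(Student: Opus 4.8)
The plan is to realize $\aFl_n$ as a flag bundle over $\aGr^0_n$ and feed in the Theorem. Projecting a lattice flag $\tS_\bull$ to its $0$th term $\tS_0$ defines a fibration $\pi\colon \aFl_n \to \aGr^0_n$. Using the periodicity $\tS_{i+n}=t^{-1}\tS_i$, a point of the fiber over $[\tS_0]$ is a complete flag $\tS_0\subset\tS_1\subset\cdots\subset\tS_n=t^{-1}\tS_0$ in the rank-$n$ quotient $E:=\tS_n/\tS_0$; thus $\pi$ exhibits $\aFl_n$ as the bundle of complete flags in $E$, with the tautological subquotients $\tS_i/\tS_{i-1}$ supplying the Chern roots $x_i=c_1^{\Tn}(\tS_i/\tS_{i-1})$. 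The same periodicity gives equivariant isomorphisms $t^{-1}\colon \tS_i/\tS_{i-1}\xrightarrow{\sim}\tS_{i+n}/\tS_{i+n-1}$, forcing $x_{i+n}=x_i$, so only $x_1,\ldots,x_n$ occur. The equivariant flag-bundle theorem then yields
\[
  H_{\Tn}^*\aFl_n = H_{\Tn}^*(\aGr^0_n)[x_1,\ldots,x_n]/\bigl(e_i(x)-c_i^{\Tn}(E)\ :\ 1\le i\le n\bigr).
\]

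The heart of the matter, and the step I expect to be hardest, is the identity
\[
  c_i^{\Tn}(E)=e_i(y)\qquad\text{in } H_{\Tn}^*(\aGr^0_n).
\]
It cannot be extracted by purely formal manipulation of the difference classes $c(V_{\leq0}-\tS_0)$ and $c(V_{\leq0}-\tS_n)$: because $t^{-1}$ shifts the index component $\aGr^0_n\to\aGr^n_n$, the evident relations among these classes are tautologically satisfied for any value of $c(E)$. Instead I would argue by equivariant localization. The $\Tn$-fixed points of $\aGr^0_n$ are the coordinate lattices $W=\Span(\ee_i:i\in S)$, where $S\subset\Z$ has index $0$ and is stable under $i\mapsto i-n$ (the $\C[[t]]$-lattice condition); such an $S$ is a disjoint union of staircases $S_r=\{i\equiv r:i\le M_r\}$, one for each residue $r$ modulo $n$. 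Then $E|_W=(t^{-1}W)/W$ is spanned by the $\ee_j$ with $j\in(S+n)\setminus S=\{M_r+n\}_{r=1}^n$, a set meeting each residue class exactly once, so the $\Tn$-weights of $E|_W$ are precisely $y_1,\ldots,y_n$ and $c_i^{\Tn}(E)|_W=e_i(y)$ at every fixed point.

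Since $\aGr^0_n$ carries a $\Tn$-invariant paving by Schubert cells it is equivariantly formal, and its equivariant cohomology is a free $\Z[y_1,\ldots,y_n]$-module, so restriction to the fixed locus is injective; as $c_i^{\Tn}(E)-e_i(y)$ restricts to $0$ at every fixed point, it vanishes, proving the identity. (All classes in sight are pulled back from finite-dimensional Schubert subvarieties, where localization and the flag-bundle formula apply literally, and one passes to the limit.) Substituting the Theorem in the case $d=0$, where $I^0_n=\langle p_k(c|y):k>n\rangle+\langle p_n(c|y)\rangle=\langle p_k(c|y):k\ge n\rangle$, the flag-bundle presentation becomes
\[
  H_{\Tn}^*\aFl_n = \Lambda[x_1,\ldots,x_n,y_1,\ldots,y_n]/\bigl(p_k(c|y)\ (k\ge n),\ e_i(x)-e_i(y)\ (1\le i\le n)\bigr),
\]
which is exactly $\Lambda[x,y]/I^{\sFl}_n$.

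Finally, every generator of $I^{\sFl}_n$ is symmetric in $y_1,\ldots,y_n$, whereas $x_1,\ldots,x_n$ and the $c_k$ are $GL_n$-equivariant Chern classes and hence $\Sgp_n$-invariant. Passing to $\Sgp_n$-invariants exactly as in Corollary~A---using that $\Z[y_1,\ldots,y_n]$ is free over $\Z[e_1,\ldots,e_n]$ and that the relations already lie in $\Lambda[x,e_1,\ldots,e_n]$---replaces each $e_i(y)$ by $e_i$ and produces the stated presentation $H_{GL_n}^*\aFl_n=\Lambda[x,e]/J^{\sFl}_n$.
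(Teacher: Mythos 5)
Your proof is correct, but it fleshes out what the paper relegates to a parenthetical ``alternative argument,'' and it establishes the key relation by a different mechanism. The paper's primary route is purely algebraic: since $\sh^n$ fixes $\aFl_n$ pointwise, the induced automorphism $\gamma^n$ of $\Lambda[x,y]$ must act trivially on the image in $H_{\Tn}^*\aFl_n$, and the formula $\gamma(C(t)) = C(t)\cdot\frac{1+y_1t}{1+x_1t}$ then forces $\prod_{i=1}^n\frac{1+y_it}{1+x_it}=1$, i.e.\ $e_i(x)=e_i(y)$; the paper only mentions in passing that $\aFl_n\to\aGr_n$ is (topologically) a trivial $\Fl(\C^n)$-bundle, which is what actually completes the deduction. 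You instead take the fibration seriously from the start, identify $\aFl_n$ as the bundle of complete flags in $E=\tS_n/\tS_0$, invoke the equivariant flag-bundle theorem over finite-dimensional Schubert approximations, and prove $c_i^{\Tn}(E)=e_i(y)$ by localization at the coordinate-lattice fixed points together with equivariant formality. Both arguments are valid and yield the same relation; the shift-automorphism computation is shorter and avoids localization (so your remark that the identity ``cannot be extracted by purely formal manipulation'' is a bit overstated --- it can, once one uses the geometric fact that $\sh^n|_{\aFl_n}=\id$), while your version makes the surjectivity of $\Lambda[x,y]\to H_{\Tn}^*\aFl_n$ and the exactness of the kernel explicit via Leray--Hirsch rather than leaving them implicit. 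Your fixed-point computation (each residue class contributes exactly one weight $y_r$ to $E|_W$) and the passage to $\Sgp_n$-invariants for the $GL_n$ statement are both correct.
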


This can be deduced from the theorem by examining the action of the shift morphism on cohomology; see \S\ref{s.bg}.

\medskip
A presentation for the non-equivariant cohomology ring $H^*\aGr^0_n$ was given by Bott \cite{bott}, who used a natural coproduct structure to identify this ring with the infinite symmetric power of the cohomology of projective space.  Since $H^*\P^{n-1} \isom \Z[\xi]/(\xi^n)$, this is easily seen to be equivalent to the result of setting the $y$ variables to $0$ in the statement of the main theorem above.  (One makes the indicated identifications with symmetric functions in variables $\xi_1,\xi_2,\ldots$, and Bott's relations become $p_k(\xi)=0$ for $k\geq n$.)

Several authors have given different presentations of the equivariant cohomology ring, sometimes with field coefficients, using localization or representation theory \cite{yun,yun-zhu,lam-shimozono}.  In the context of the moduli stack of vector bundles on $\P^1$, Larson described the integral cohomology ring as a subring of a polynomial ring with rational coefficients \cite{larson}.  In fact, Larson's description is equivalent to the quotient ring appearing in Corollary A; the precise translation is given in \S\ref{s.VBs} below.

In this note, the main contributions are to provide a concise presentation of $H_{\Tn}^*\aGr_n^d$ as a quotient of a polynomial ring, and to show how Bott's method extends naturally to the equivariant setting.  We also describe a new basis of double monomial symmetric functions which are well-adapted to the presentation of $H_{\Tn}^*\aGr_n$.  Apart from some elementary calculations with symmetric functions, the only additional input required is a well-known presentation of the equivariant cohomology of projective space.

\medskip
\noindent
{\it Acknowledgements.}  Thomas Lam very helpfully pointed me to references for other presentations of the equivariant cohomology of the affine Grassmannian.  I thank Linda Chen, Hannah Larson, and Isabel Vogt for many conversations about the cohomology of the affine Grassmannian and of the moduli stack of vector bundles.

\section{Infinite and affine flag varieties}\label{s.bg}

We follow \cite{infschub}, which in turn is based on \cite{LLS} and \cite{pressley-segal}.  As in the introduction, $V$ is a complex vector space with basis $\ee_i$ for $i\in \Z$.  For any interval $[a,b]$ in $\Z$, we write $V_{[a,b]}$ for the subspace spanned by $\ee_i$ for $i$ in $[a,b]$.  We will especially use subspaces $V_{\leq p}$ (or $V_{>q}$), spanned by $\ee_i$ for $i\leq p$ (or $i>q$, respectively).

\subsection{Definitions}
The \define{Sato Grassmannian} $\sGr^d$ is the set of subspaces $E\subset V$ of index $d$.  This means (1) $V_{\leq -m} \subset E \subset V_{\leq m}$ for some (and hence all)  $m\gg0$, and (2) $\dim E/(V_{\leq 0} \cap E) - \dim V_{\leq0}/(V_{\leq0}\cap E) = d$.  The Sato Grassmannian is naturally topologized as an ind-variety.

The \define{Sato flag variety} is the subvariety $\sFl\subset \prod_{d\in\Z}\sGr^d$ consisting of chains of subspaces $E_\bull: \cdots \subset E_{-1}\subset E_0 \subset E_1 \subset \cdots$, with $E_d\subset V$ belonging to $\sGr^d$.  It is naturally a pro-ind-variety, and comes with projection morphisms $\pi_d \colon \sFl \to \sGr^d$.

The \define{shift automorphism} $\sh\colon V \to V$, defined by $\ee_i\mapsto \ee_{i-1}$, induces an automorphism of $\sFl$, by $\sh(E_\bull)_k = \sh(E_{k+1})$.  For a fixed positive $n$, the \define{affine flag variety} is the fixed locus of $\sh^n$:
\[
  \aFl_n = \{ E_\bull \in \sFl \,|\, \sh^n(E_\bull) = E_\bull \}.
\]
The \define{affine Grassmannian} is the image of $\aFl_n$ under the projection map:
\[
  \aGr_n^d = \pi_d(\aFl_n) = \{ E \in \sGr^d \,|\, \sh^n(E)\subset E \}.
\]

A torus $\TZ = \prod_{i\in\Z} \C^*$ acts on $V$ by scaling the coordinate $\ee_i$ by the character $y_i$.  This induces actions on $\sFl$ and $\sGr^d$.  We cyclically embed $\Tn = (\C^*)^n$ in $\TZ$, by specializing characters $y_i \mapsto y_{i\pmod n}$, using representatives $1,\ldots,n$ for residues mod $n$.  So $\Tn \subset \TZ$ is the fixed subgroup for the automorphism induced by $\sh^n$, and $\Tn$ therefore acts on $\aFl_n$ and $\aGr_n^d$.

The $\Tn$-fixed points of $\sFl$ (which are the same as the $\TZ$-fixed points) are indexed by the set $\Inj^0$ consisting of all injections $w\colon \Z \to \Z$ such that
\[
 \#\{ i\leq 0 \,|\, w(i)>0 \} = \#\{j>0\,|\, w(j)\leq 0\},
\]
and both these cardinalities are finite.\footnote{This implies $\#\{ i\leq d \,|\, w(i)>0 \} - \#\{j>d\,|\, w(j)\leq 0 \} = d$ for any integer $d$.}  The flag $E^w_\bull$ corresponding to $w\in\Inj^0$ consists of subspaces $E_k$ spanned by $\ee_{w(i)}$ for $i\leq k$, together with all $\ee_j$ for $j\leq 0$ not in the image of $w$.  The condition defining $\Inj^0$ guarantees $E_\bull^w$ lies in $\sFl$.  (See \cite[\S6]{infschub}.)

The $\Tn$-fixed points of $\aFl_n$ are indexed by the group of affine permutations.  This is the group $\tilde\Sgp_n$ consisting of bijections $w$ from $\Z$ to itself, such that $w(i+n)=w(i)+n$ for all $i\in\Z$, and such that $\sum_{i=1}^n w(i) = \binom{n}{2}$.  Among many other equivalent descriptions, this is the subset of $n$-shift-invariant elements in $\Inj^0$:
\[
  \tilde\Sgp_n = \{ w\in \Inj^0 \,|\, w(i+n)=w(i)+n \text{ for all }i\}.
\]

Similarly, $GL_n$ acts on $V$, extending the standard action on $V_{[1,n]}\isom \C^n$ by blocks, so $V = \cdots \oplus V_{[-n+1,0]}\oplus V_{[1,n]}\oplus V_{[n+1,2n]}\oplus \cdots$.  This induces actions on the Sato and affine flag varieties and Grassmannians.

Often we'll omit the superscript when focusing on the degree $d=0$ component, writing $\sGr = \sGr^0$ and $\aGr_n = \aGr_n^0$.

\subsection{Chern classes and cohomology}\label{ss.chern}

We write $c^{(d)}_k=c_k^{\Tn}(V_{\leq0}-\tS_d)$ in $H_{\Tn}^*\sGr^d$, and we use the same notation for the pullbacks to other varieties.  For $d=0$, or when the index is understood, we omit the superscript.  We have canonical isomorphisms
\[
  H_{\Tn}^*\sGr^d = \Lambda[y_1,\ldots,y_n] \quad \text{and} \quad H_{\Tn}^*\sFl=\Lambda[\ldots,x_{-1},x_0,x_1,\ldots;y_1,\ldots,y_n],
\]
where $\Lambda=\Z[c_1,c_2,\ldots]$ and $x_i=c_1^{\Tn}(\tS_i/\tS_{i-1})$ as before. (See \cite[\S3]{infschub}, but note that our sign convention on $x_i$ is opposite the one used there.)

For each fixed point $w\in\Inj^0$, there is a localization homomorphism $H_{\Tn}^*\sFl \to \Z[y_1,\ldots,y_n]$, given by
\[
  x_i \mapsto y_{w(i)} \qquad \text{and}\qquad c_k \mapsto [t^k]\Big(\mathop{\prod_{i\leq0, w(i)>0}}_{j>0,w(j)\leq0} \frac{1+y_{w(j)}t}{1+y_{w(i)}t} \Big).
\]
Here the operator $[t^k]$ extracts the coefficient of $t^k$, and we always understand $y_{a}$ as $y_{a \pmod n}$.  Since $w\in\Inj^0$, the RHS is a finite product.  The same formulas define localization homomorphisms for $\sGr$, $\aFl_n$, and $\aGr_n$.  For $\sGr^d$ and $\aGr^d_n$ with $d\neq 0$, we use
\[
  c^{(d)}_k \mapsto [t^k]\Big(\mathop{\prod_{i\leq d, w(i)>0}}_{j>d,w(j)\leq0} \frac{1+y_{w(j)}t}{1+y_{w(i)}t} \Big).
\]
We do not logically require these localization homomorphisms, but they are useful for checking that relations hold, and comparing them against other sources.

The shift morphism determines an automorphism $\gamma=\sh^*$ of $\Lambda[x,y]$, by
\[
  \gamma(x_i)=x_{i+1}, \quad \gamma(y_i)=y_{i+1}, \quad \text{and} \quad \gamma(C(t)) = C(t) \cdot \frac{1+y_1 t}{1+x_1 t},
\]
where $C(t) = \sum_{k\geq 0} c_k t^k$ is the generating series for $c$.

The inclusions $\aFl_n \hookrightarrow \sFl$ and $\aGr_n^d \hookrightarrow \sGr^d$ determine pullback homomorphisms on cohomology: we have maps
\[
 \Lambda[x;y] = H_{\Tn}^*\sFl \to H_{\Tn}^*\aFl_n \quad \text{and} \quad \Lambda[y] = H_{\Tn}^*\sGr^d \to H_{\Tn}^*\aGr^d_n.
\]
The main theorems assert that these homomorphisms are surjective, and specify the kernels.  One relation is immediately evident: since $\sh^n$ fixes $\aFl_n \subset \sFl$, we have $\gamma^n(c)=c$, so
\[
  \prod_{i=1}^n \frac{1+y_i t}{1+x_i t} = 1
\]
in $H_{\Tn}^*\aFl_n$.  As promised in the introduction, this shows that Corollary~B follows from the Theorem.

(An alternative argument uses the fact, not needed here, that the projection $\aFl_n \to \aGr_n$ is topologically identified with the trivial fiber bundle $\aGr_n \times \Fl(\C^n) \to \aGr_n$.)

\subsection{Coproduct}

There is a co-commutative coproduct structure on $\Lambda$, where the map $\Lambda \to \Lambda\otimes \Lambda$ is given by $c_k \mapsto c_k \otimes 1 + c_{k-1} \otimes c_1  +\cdots + 1 \otimes c_k$.  This extends $\Z[y]$-linearly to a coproduct on $\Lambda[y] = H_{\Tn}^*\sGr$.  As explained in \cite[\S8]{infschub}, this can be interpreted as an (equivariant) cohomology pullback via the direct sum morphism $\sGr \times \sGr \to \sGr$.

Likewise, there is a co-commutative coproduct structure on $H_{\Tn}^*\aGr_n$, coming from a homotopy equivalence with the based loop group, $\aGr_n \sim \Omega SU(n)$ (see \cite[\S8.6]{pressley-segal}).  The homotopy equivalence is equivariant with respect to the compact torus $(S^1)^n \subset \Tn$.  So the group structure on $\Omega SU(n)$ determines a coproduct on $H_{(S^1)^n}^*\Omega SU(n) = H_{\Tn}^*\aGr_n$.  (This coproduct can also be realized algebraically, but the construction is somewhat more involved than the direct sum map for $\sGr$---see, e.g., \cite{yun-zhu}.)

The coproducts on $H_\Tn^*\sGr$ and $H_\Tn^*\aGr_n$ are compatible, in the sense that the inclusion $\aGr_n \hookrightarrow \sGr$ induces a pullback homomorphism of co-algebras (and in fact, of Hopf algebras): the diagram
\[
\begin{tikzcd}
  H_\Tn^*\sGr \ar[r] \ar[d] & H_\Tn^*\sGr \otimes_{\Z[y]} H_\Tn^*\sGr \ar[d] \\
  H_\Tn^*\aGr_n \ar[r]  & H_\Tn^*\aGr_n \otimes_{\Z[y]} H_\Tn^*\aGr_n 
\end{tikzcd}
\]
commutes.

\section{Some algebra of symmetric functions}\label{s.Sym}

Before turning to the computation of equivariant cohomology rings, we review some basic facts about symmetric functions.  Much of what we need can be found in standard sources, e.g., \cite[Chapter I]{Mac1}.  We indicate proofs for facts not easily found there.

Recall $\Lambda=\Z[c_1,c_2,\ldots]$ and $\Lambda[y] = \Lambda[y_1,\ldots,y_n]$.

\subsection{Notation}

Let $\LambdaXi = \Z[\xi_1,\xi_2,\ldots]^{\Sgp_\infty}$ be the ring of symmetric functions in countably many variables $\xi_1,\xi_2,\ldots$, each of degree $2$.  This is the inverse limit of $\LambdaXi_r = \Z[\xi_1,\ldots,\xi_r]^{\Sgp_r}$ as $r\to \infty$ (in the category of graded rings).  It may be identified with the polynomial ring $\Z[h_1,h_2,\ldots]$, where $h_k=h_k(\xi)$ is the \define{complete homogeneous symmetric function} in $\xi$.

There is also a $\Z$-linear basis for $\LambdaXi$ consisting of the \define{monomial symmetric functions} $m_\lambda(\xi)$.  Given a partition $\lambda = (\lambda_1 \geq \lambda_2 \geq \cdots \geq \lambda_r\geq 0)$, the function $m_\lambda(\xi)$ is the symmetrization of the monomial $\xi_1^{\lambda_1} \xi_2^{\lambda_2} \cdots \xi_r^{\lambda_r}$---that is, the sum of all distinct permutations of this monomial.

The \define{power sum functions} $p_k(\xi) = \xi_1^k + \xi_2^k + \cdots$ also play an important role.  They generate $\LambdaXi$ as a $\Q$-algebra, but not as a $\Z$-algebra.  The function $p_k(\xi)$ is expressed in terms of the functions $h_k(\xi)$ via the {\it Newton relations}, which can be written as the determinant \eqref{e.Newton1}, substituting $h_k$ for $c_k$ in the matrix.

\subsection{An equality of ideals}

First we consider finitely many variables $\xi_1,\ldots,\xi_r$, and the symmetric polynomial ring $\LambdaXi_r \subset \Z[\xi_1,\ldots,\xi_r]$.

\begin{lemma}
Fix $n>0$, and consider the ideal $(\xi_1^n,\ldots,\xi_r^n)\subset \Z[\xi_1,\ldots,\xi_r]$.  As ideals in $\LambdaXi$, we have
\[
  (\xi_1^n,\ldots,\xi_r^n) \cap \LambdaXi_r = \big( m_\lambda(\xi) \big)_{\lambda_1\geq n} = \big( p_k(\xi) \big)_{k\geq n}.
\]
\end{lemma}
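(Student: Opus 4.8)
The plan is to establish the two equalities separately, treating the first (with the monomial-generated ideal) as the backbone and then deducing the power-sum description from it.

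For the first equality I would use that $(\xi_1^n,\ldots,\xi_r^n)$ is a \emph{monomial} ideal in $\Z[\xi_1,\ldots,\xi_r]$: a polynomial lies in it precisely when every monomial occurring in it has some exponent $\geq n$. Since the $m_\lambda(\xi)$ form a $\Z$-basis of $\LambdaXi_r$, and the monomials of $m_\lambda$ are exactly the distinct permutations of $\xi_1^{\lambda_1}\cdots\xi_r^{\lambda_r}$ (each with largest exponent equal to $\lambda_1$), a symmetric polynomial $\sum_\lambda a_\lambda m_\lambda$ lands in the monomial ideal iff $a_\lambda=0$ whenever $\lambda_1<n$. This identifies $(\xi_1^n,\ldots,\xi_r^n)\cap\LambdaXi_r$ with the $\Z$-span of $\{m_\lambda:\lambda_1\geq n\}$; as that span is visibly an ideal containing each generator $m_\lambda$ with $\lambda_1\geq n$, it equals $(m_\lambda)_{\lambda_1\geq n}$.

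For the second equality one inclusion is immediate, since $p_k=m_{(k)}$ gives $p_k\in(m_\lambda)_{\lambda_1\geq n}$ for $k\geq n$. The reverse inclusion---that each $m_\lambda$ with $\lambda_1\geq n$ lies in $(p_k)_{k\geq n}$---I would attack by induction on the number of parts $\ell(\lambda)$, peeling off the largest part. Writing $\mu=(\lambda_2,\ldots,\lambda_{\ell(\lambda)})$, the key computation is the expansion of $p_{\lambda_1}\,m_\mu$ in the monomial basis. Separating the terms according to whether the variable contributed by $p_{\lambda_1}$ already occurs in a given monomial of $m_\mu$, one obtains
\[
  p_{\lambda_1}\,m_\mu = c\cdot m_\lambda + \sum_\nu a_\nu\,m_\nu,
\]
where $c$ is the multiplicity of $\lambda_1$ in $\lambda$, and every $\nu$ in the remaining sum satisfies $\ell(\nu)=\ell(\lambda)-1$ with largest part $\nu_1\geq\lambda_1+1>n$. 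As $p_{\lambda_1}\in(p_k)_{k\geq n}$ and, by the inductive hypothesis, each such $m_\nu\in(p_k)_{k\geq n}$, this exhibits $c\cdot m_\lambda$ as an element of $(p_k)_{k\geq n}$; the base case $\ell(\lambda)=1$ is simply $m_{(k)}=p_k$.

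The step I expect to be the main obstacle is the integral descent from $c\cdot m_\lambda$ to $m_\lambda$ itself. When the largest part is unrepeated one has $c=1$ and the induction closes at once; but when $\lambda_1$ has multiplicity $c>1$ the relation above produces only a multiple of $m_\lambda$. This is exactly where integrality (as opposed to the trivial rational statement, in which the $p_k$ generate $\LambdaXi\otimes\Q$) becomes delicate, and the sharpest test case is $\lambda=(n,n)$: there the only product of the required type is $p_n^2=2\,m_{(n,n)}+p_{2n}$, so the construction delivers merely $2\,m_{(n,n)}\in(p_k)_{k\geq n}$. My plan to overcome this would be to assemble \emph{several} relations $p_a\,m_{\mu'}$---peeling off different admissible parts $a\geq n$---so that the resulting coefficients of $m_\lambda$ have greatest common divisor $1$; establishing that such a coprime family of relations always exists, and in particular handling the repeated-largest-part partitions where few such products are available, is the crux I would need to settle before the induction can be completed over $\Z$.
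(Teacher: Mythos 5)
Your proof of the first equality is correct and coincides with the paper's: $(\xi_1^n,\ldots,\xi_r^n)$ is a monomial ideal with $\Z$-basis the monomials having some exponent $\geq n$, and grouping these into $\Sgp_r$-orbits identifies its symmetric part with the $\Z$-span of the $m_\lambda$ with $\lambda_1\geq n$, which is itself an ideal. For the second equality the paper offers only the sentence that $m_\lambda\in(p_k)_{k\geq n}$ ``is proved by induction on the number of parts of $\lambda$''---exactly the induction you set up---with no mention of the integrality problem you isolate. Your computation of the coefficient $c$ (the multiplicity of $\lambda_1$ in $\lambda$) is right, and you are right that the argument only yields $c\cdot m_\lambda\in(p_k)_{k\geq n}$.

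The crux you flag cannot be resolved, because the second equality is false whenever $r\geq 2$: no coprime family of relations exists. Let $\phi\colon\LambdaXi_r\to\Z$ be the $\Z$-linear functional extracting the coefficient of the monomial $\xi_1^n\xi_2^n$. For symmetric $f$ and $k\geq n$ one has $\phi(f\,p_k)=\sum_i [\xi_1^n\xi_2^n](f\,\xi_i^k)$; the $i$-th term vanishes unless $i\in\{1,2\}$ and $k\leq n$, hence $k=n$, in which case it equals $[\xi_2^n](f)$ for $i=1$ and $[\xi_1^n](f)$ for $i=2$, and these agree because $f$ is symmetric. So $\phi$ takes only even values on all of $(p_k)_{k\geq n}$, while $\phi(m_{(n,n)})=1$ and $m_{(n,n)}$ visibly lies in $(\xi_1^n,\ldots,\xi_r^n)\cap\LambdaXi_r$. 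Hence $m_{(n,n)}\notin(p_k)_{k\geq n}$, confirming that your test case $p_n^2=p_{2n}+2m_{(n,n)}$ is not an artifact of one particular relation. The equality does hold after tensoring with $\Q$, where your induction closes; integrally, only the first equality survives, and it is the monomial generators $(m_\lambda)_{\lambda_1\geq n}$ (equivalently the double monomial functions of Section~\ref{s.monomial}) that describe the ideal needed in Corollary~\ref{c.sympn} and downstream. As a sanity check, for $n=2$ the quotient $\LambdaXi/(p_k)_{k\geq 2}$ acquires a $\Z/2$ summand generated by $m_{(2,2)}$ in degree $4$, whereas $H^4(\Omega SU(2))=H^4(\Omega S^3)=\Z$. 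In short: your argument is correct as far as it goes, and the gap you honestly identify is a genuine error in the statement itself, not a defect of your approach.
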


\begin{proof}
The first equality holds because monomials $\xi_1^{a_1}\cdots \xi_r^{a_r}$ with some $a_i\geq n$ form a $\Z$-linear basis for $(\xi_1^n,\ldots,\xi_r^n)\subset \Z[\xi_1,\ldots,\xi_r]$.  For the second equality, the inclusion ``$\supseteq$'' is evident, because $p_k = m_{(k)}$.  It remains to see that $m_\lambda$ lies in the ideal $(p_k)_{k\geq n}$ whenever $\lambda_1\geq n$, and this is proved by induction on the number of parts of $\lambda$.
\end{proof}

Taking the inverse limit over $r$ (in the category of graded rings), we obtain the following:
\begin{corollary}\label{c.sympn}
We have isomorphisms of graded rings
\begin{align*}
  \varprojlim_r \big( \Z[\xi_1,\ldots,\xi_r]/(\xi_1^n,\ldots,\xi_r^n) \big)^{\Sgp_r} &= \LambdaXi/\big( m_\lambda(\xi) \big)_{\lambda_1\geq n} \\
  &= \LambdaXi/\big( p_k(\xi) \big)_{k\geq n}.
\end{align*}
\end{corollary}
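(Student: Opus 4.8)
The plan is to deduce the corollary from the finite-variable Lemma by passing to the inverse limit over $r$, the only real work being to reconcile the invariants of the \emph{quotient} ring appearing on the left with the quotient of the invariant ring $\LambdaXi_r$ that the Lemma controls. First I would fix $r$ and observe that $A_r := \Z[\xi_1,\ldots,\xi_r]/(\xi_1^n,\ldots,\xi_r^n)$ is a \emph{permutation} $\Sgp_r$-module: it is free over $\Z$ on the monomials $\xi_1^{a_1}\cdots\xi_r^{a_r}$ with all $0\le a_i<n$, and $\Sgp_r$ merely permutes this basis. Consequently $A_r^{\Sgp_r}$ is free over $\Z$ on the orbit sums, one for each partition $\mu$ with $\mu_1<n$ and at most $r$ parts, and each such orbit sum is visibly the image of the monomial symmetric function $m_\mu(\xi_1,\ldots,\xi_r)$.

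It follows that the natural map $\LambdaXi_r \to A_r^{\Sgp_r}$ is surjective: it sends $m_\mu\mapsto 0$ when $\mu_1\ge n$ and $m_\mu$ to the corresponding orbit sum when $\mu_1<n$, so its kernel is exactly the $\Z$-span of the $m_\mu$ with $\mu_1\ge n$, namely $I_r := (\xi_1^n,\ldots,\xi_r^n)\cap\LambdaXi_r$. This is the crucial point: for this particular module no higher group cohomology intervenes, and taking invariants commutes with the quotient. Invoking the Lemma to rewrite $I_r$, I obtain a graded isomorphism $A_r^{\Sgp_r}\cong\LambdaXi_r/(p_k(\xi))_{k\ge n} = \LambdaXi_r/(m_\lambda(\xi))_{\lambda_1\ge n}$, and one checks it is compatible with the restriction maps $\xi_{r+1}\mapsto 0$.

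It then remains to pass to the limit. Since the inverse limit of graded rings is computed in each fixed degree $d$, and there the transition maps $\LambdaXi_{r+1}\to\LambdaXi_r$ (and likewise those of $I_r$ and of $A_r^{\Sgp_r}$) are isomorphisms once $r\ge d$, every system in sight is eventually constant, hence Mittag--Leffler with vanishing $\varprojlim^1$. Applying $\varprojlim_r$ to the short exact sequences $0\to I_r \to \LambdaXi_r \to A_r^{\Sgp_r}\to 0$ therefore keeps them exact, giving $\varprojlim_r A_r^{\Sgp_r} = \LambdaXi/\varprojlim_r I_r$. A final degreewise comparison, using that $\LambdaXi\to\LambdaXi_r$ is a degree-$d$ isomorphism for $r\ge d$ and carries $m_\lambda$ to $m_\lambda$, identifies $\varprojlim_r I_r$ with the ideal $(m_\lambda)_{\lambda_1\ge n}=(p_k)_{k\ge n}$ of $\LambdaXi$, yielding both claimed equalities.

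I expect the interchange of invariants with the quotient to be the main obstacle worth spelling out, since it fails for a general submodule and is made to hold here precisely by the permutation-module structure of $A_r$. The secondary point requiring care is the exactness of the graded $\varprojlim$, which is what guarantees that the ideal and its quotient pass correctly to the limit; both reduce to the elementary fact that each system is eventually constant in every degree.
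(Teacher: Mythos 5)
Your proof is correct and follows the same route the paper intends: the paper derives the corollary from the preceding lemma simply by asserting that one takes the inverse limit over $r$ in the category of graded rings, leaving implicit exactly the two points you spell out (that invariants commute with the quotient because $A_r$ is a permutation module, and that the graded inverse limit is exact because each system is eventually constant in every degree). Your write-up is a valid, more detailed version of that argument.
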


\subsection{Some identities in $\Lambda[y]$}\label{ss.ident}

We define elements $h_k\in\Lambda[y_1,\ldots,y_n]$ by
\begin{align}\label{e.h2c}
   h_k &= \sum_{i=0}^{k-1} \binom{k-1}{i} y_0^i\, c_{k-i},
\end{align}
writing $y_0=y_n$ to emphasize stability with respect to $n$.

Let $H(t)=\sum_{k\geq0} h_k t^k$ and $C(t) = \sum_{k\geq 0} c_k t^k$ be the generating series, with $h_0=c_0=1$.  Then \eqref{e.h2c} is equivalent to $H(t) = C\left( t/(1-y_0t) \right)$.  Both the $h$'s and the $c$'s are algebraically independent generators of $\Lambda[y]$ as a $\Z[y]$-algebra.

We define new elements $\tilde{p}_k \in \Z[h_1,h_2,\ldots]$ by an identity of generating series:
\begin{equation}
  \tilde{P}(t) := \sum_{k\geq 1} \tilde{p}_k t^{k-1} = \frac{d}{dt}\log H(t).
\end{equation}
Similarly, we have
\begin{equation}\label{e.plog}
  P(t) := \sum_{k\geq 1} p_k t^{k-1} = \frac{d}{dt}\log C(t).
\end{equation}
(These formulas are equivalent to the Newton relations \eqref{e.Newton1}; see, e.g., \cite[\S2]{Mac1}.)

\begin{remark}
There is an isomorphism $\Lambda[y]=\Z[c,y] \xrightarrow{\sim} \LambdaXi[y]$ determined by evaluating the generating series $C(t) = \sum c_k t^k$ as
\begin{equation}\label{e.c-eval}
  C(t) \mapsto \prod_{i\geq 1} \frac{1+y_0 t}{1-\xi_i t+ y_0t}.
\end{equation}
Under this identification, $H(t) = \prod_{i\geq 1} \frac{1}{1-\xi_i t}$, so $h_k$ maps to $h_k(\xi)$, and $\tilde{p}_k$ becomes the power sum function $p_k(\xi)= \xi_1^k + \xi_2^k + \cdots$.  
\end{remark}

Let $E(t) = \prod_{i=1}^n (1+ y_i t)$ be the generating series for the elementary symmetric polynomials in $y_1,\ldots,y_n$, and let $\tilde{E}(t) = \prod_{i=1}^n (1+(y_i-y_0)t)$ be the corresponding series in variables $y_i-y_0$.  So $\tilde{E}(t) = E(t/(1-y_0t))\cdot (1-y_0t)^n$.

Finally, let
\begin{align}
  p_k(c|y) &= p_k + p_{k-1} e_1(y) + \cdots + p_1 e_{k-1}(y) \label{e.pcydef}
\intertext{and}
 \tilde{p}_k(h|y) &= \tilde{p}_k + \tilde{p}_{k-1} e_1(y_1-y_0,\ldots,y_n-y_0) + \cdots \label{e.phydef} \\
   & \qquad + \tilde{p}_1 e_{k-1}(y_1-y_0,\ldots,y_n-y_0). \nonumber
\end{align}
Equivalently, the generating series for $p_k(c|y)$ and $\tilde{p}_k(h|y)$ are given by
\begin{equation*}
  \bm{P}(t)=P(t)\cdot E(t) \qquad \text{ and } \qquad \tilde{\bm{P}}(t) = \tilde{P}(t)\cdot\tilde{E}(t),
\end{equation*}
respectively.

We wish to compare the ideals generated by $p_k(c|y)$ and $\tilde{p}_k(h|y)$.

\begin{lemma}\label{l.pc2ph}
For $k\geq n$, we have
\begin{align}
 \tilde{p}_k(h|y) &= \sum_{i=0}^{k-1} \binom{k-n}{i} y_0^i p_{k-i}(c|y) \label{e.pc2ph}
\intertext{and}
 p_k(c|y) &= \sum_{i=0}^{k-1} \binom{k-n}{i} (-y_0)^i \tilde{p}_{k-i}(h|y). \label{e.ph2pc}
\end{align}
In particular, we have an equality
\[
  \Big( p_k(c|y) \Big)_{k\geq n} = \Big( \tilde{p}_k(h|y) \Big)_{k\geq n}
\]
of ideals in $\Lambda[y_1,\ldots,y_n]$.
\end{lemma}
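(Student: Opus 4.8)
The plan is to reduce both identities to a single relation between the generating series $\bm{P}(t)=P(t)E(t)$ and $\tilde{\bm{P}}(t)=\tilde{P}(t)\tilde{E}(t)$, and then extract coefficients. The starting point is the substitution $s=t/(1-y_0t)$, under which $H(t)=C(s)$. Differentiating $\log H(t)=\log C(s)$ by the chain rule, together with $ds/dt=(1-y_0t)^{-2}$, gives $\tilde{P}(t)=(1-y_0t)^{-2}\,P(s)$. Combined with the stated relation $\tilde{E}(t)=(1-y_0t)^{n}E(s)$, multiplying yields the master identity
\[
  \tilde{\bm{P}}(t)=(1-y_0t)^{n-2}\,\bm{P}\!\left(\frac{t}{1-y_0t}\right).
\]

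Next I would extract coefficients. Writing $\bm{P}(s)=\sum_{j\geq1}p_j(c|y)\,s^{j-1}$ and substituting $s=t/(1-y_0t)$, the master identity becomes $\tilde{\bm{P}}(t)=\sum_{j\geq1}p_j(c|y)\,t^{j-1}(1-y_0t)^{n-1-j}$. Reading off the coefficient of $t^{k-1}$ via the formal binomial expansion of $(1-y_0t)^{n-1-j}$ (valid for any integer exponent), and setting $i=k-j$, gives
\[
  \tilde{p}_k(h|y)=\sum_i \binom{n-1-k+i}{i}(-y_0)^i\,p_{k-i}(c|y).
\]
The identity \eqref{e.pc2ph} then follows from the upper-negation identity $\binom{n-1-k+i}{i}=(-1)^i\binom{k-n}{i}$, which I would verify by writing out the falling factorial with $a=k-n$. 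For $k\geq n$ the factor $\binom{k-n}{i}$ forces $0\leq i\leq k-n$, so every index $k-i$ appearing on the right satisfies $k-i\geq n$.

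For \eqref{e.ph2pc} I would invert the substitution: solving $s=t/(1-y_0t)$ gives $t=s/(1+y_0s)$ and $1-y_0t=(1+y_0s)^{-1}$, so the master identity rearranges to $\bm{P}(s)=(1+y_0s)^{n-2}\,\tilde{\bm{P}}\!\left(s/(1+y_0s)\right)$, which is the same relation with the roles of the two families swapped and $y_0$ replaced by $-y_0$. Repeating the coefficient extraction verbatim, now with base $(1+y_0s)$ in place of $(1-y_0t)$, and applying the same negation identity, produces \eqref{e.ph2pc} with its $(-y_0)^i$ weights. Finally, the ideal equality is immediate: for $k\geq n$ each formula expresses a generator of one family as a $\Z[y]$-linear combination of generators of the other family, all with indices $\geq n$, so the two generating sets span the same ideal.

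The only real obstacle is bookkeeping: handling the formal binomial $(1-y_0t)^{n-1-j}$ with a possibly negative exponent and confirming the negation identity that converts $\binom{n-1-k+i}{i}$ into $\binom{k-n}{i}$. Everything else is routine generating-series algebra, and the symmetry between the two directions means the second identity costs almost nothing once the first is in hand.
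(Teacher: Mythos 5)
Your proof is correct and follows essentially the same route as the paper: both derive the master identity $\tilde{\bm{P}}(t)=(1-y_0t)^{n-2}\,\bm{P}\bigl(t/(1-y_0t)\bigr)$ from the chain rule and the relation between $\tilde{E}$ and $E$, extract the coefficient of $t^{k-1}$, and convert $\binom{n-1-k+i}{i}$ via upper negation. Your explicit inversion of the substitution for \eqref{e.ph2pc} just fills in what the paper dismisses as ``analogous.''
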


\begin{proof}
The second statement follows from the first, the RHS of \eqref{e.pc2ph} involves only $p_i(c|y)$ for $i\geq n$, and likewise the RHS of \eqref{e.ph2pc} involves only $\tilde{p}_i(h|y)$ for $i\geq n$.

To prove \eqref{e.pc2ph}, we expand the definitions and compute:
\begin{align*}
  \tilde{\bm{P}}(t) &= \tilde{P}(t)\cdot \tilde{E}(t) \\
                  &= \left(\frac{d}{dt}\log H(t)\right)\cdot E\Big( t/(1-y_0t) \Big)\cdot (1-y_0t)^n \\
                  &= \left(\frac{d}{dt}\log C\Big( t/(1-y_0t) \Big)\right)\cdot E\Big( t/(1-y_0t) \Big)\cdot (1-y_0t)^n \\
                  &= \frac{1}{(1-y_0t)^2} P\Big( t/(1-y_0t) \Big) \cdot E\Big( t/(1-y_0t) \Big)\cdot (1-y_0t)^n \\
                  &= (1-y_0t)^{n-2}\bm{P}\Big( t/(1-y_0t) \Big).
\end{align*}
Expanding the RHS, we obtain
\[
  \sum_{m\geq 1} p_m(c|y) t^{m-1} (1-y_0 t)^{n-m-1} = \mathop{\sum_{m\geq 1}}_{i\geq 0} p_m(c|y) \binom{n-m-1}{i}(-y_0)^i t^{m-1+i}.
\]
Setting $k=m+i$, for $k\geq n$ the coefficient of $t^{k-1}$ is
\[
 \sum_{i=0}^{k-1} \binom{n-k+i-1}{i}(-y_0)^i p_{k-i}(c|y) = \sum_{i=0}^{k-1} \binom{k-n}{i}y_0^i p_{k-i}(c|y)
\]
as desired.  (The last equality uses the extended binomial coefficient identity $\binom{-m}{i} = (-1)^i\binom{m+i-1}{i}$.) The proof of \eqref{e.ph2pc} is analogous.
\end{proof}

\section{Proof of the main theorem}\label{s.bott}

Given any variety $X$ with basepoint $p_0$, Bott \cite{bott} considers a system of embeddings
\[
  X^{\times r} = X^{\times r} \times \{p_0\} \hookrightarrow X^{\times r+1}. 
\]
Assume $\Tn$ acts on $X$, fixing $p_0$, so these embeddings are $\Tn$-equivariant.  The symmetric group $\Sgp_r$ acts on these products by permuting factors, and therefore on their (equivariant) cohomology rings.  The inverse limit is written
\begin{equation}
  \Sgp H_{\Tn}^*X := \varprojlim_r \big( H_{\Tn}^*X^{\times r}\big)^{\Sgp_r}.
\end{equation}

We further assume $H_\Tn^*X$ is free over $\Z[y]=H_\Tn^*(\pt)$, and has no odd cohomology.    Then $H_\Tn^*X^{\times r} = H_\Tn^*X \otimes_{\Z[y]} \cdots \otimes_{\Z[y]} H_\Tn^*X$ ($r$ factors).  In this case, given any $\Tn$-equivariant morphism $f\colon X \to \aGr_n$, there is a pullback homomorphism
\[
 H_{\Tn}^*\aGr_n \to H_{\Tn}^*X \otimes_{\Z[y]} \cdots \otimes_{\Z[y]} H_\Tn^*X,
\]
obtained by factoring through the $r$-fold coproduct on $H_{\Tn}^*\aGr_n$.  Since the coproduct is commutative, the image lies in the $\Sgp_r$-invariant part of the tensor product.  Taking the limit over $r$ produces a homomorphism
\[
  f^* \colon H_{\Tn}^*\aGr_n \to \Sgp H_{\Tn}^*X.
\]

For $X$, we take projective space $\P(V_{[0,n-1]}) \isom \P^{n-1}$, with basepoint $p_0$ corresponding to the line $\C\cdot \ee_0 \subset V_{[0,n-1]}$, which is scaled by the character $y_0=y_n$.  (Recall that we treat indices of $y_i$ modulo $n$.)

Let $H = \P(V_{[1,n-1]}) \subset \P(V_{[0,n-1]}) = \P^{n-1}$ be the hyperplane defined by $e^*_0=0$, and let $\xi = [H]$ be its class in $H_{\Tn}^2\P^{n-1}$.  So $\xi =c_1^\Tn(\O(1)) + y_0$, where $\O(1)$ is the dual of the tautological bundle on $\P^{n-1}$.  The equivariant cohomology ring of $\P^{n-1}$ has a well-known presentation, which in our notation takes the form
\begin{equation}\label{e.HTPn}
  H_{\Tn}^*\P^{n-1} = \Z[y][\xi]/\big( \xi(\xi+y_1-y_0)\cdots(\xi+y_{n-1}-y_0) \big).
\end{equation}
Written slightly differently, the defining relation is
\begin{align}
  \xi^n + \xi^{n-1}\, e_1(y_1-y_0,\ldots,y_n-y_0) + \cdots \label{e.HTPn2} \\ \qquad + \xi\, e_{n-1}(y_1-y_0,\ldots,y_n-y_0) = 0, \nonumber
\end{align}
which one should compare with \eqref{e.phydef}.  
Similarly, let $H_i \subset (\P^{n-1})^{\times r}$ be the hyperplane defined by $e^*_0=0$ on the $i^\mathrm{th}$ factor, and let $\xi_i = [H_i]$ be its class in $H_\Tn^*(\P^{n-1})^{\times r}$, which has a presentation with one relation of the form \eqref{e.HTPn2} for each $\xi_i$.  Taking symmetric invariants leads to the following calculation:

\begin{lemma}\label{l.SHTP}
The ring $\Sgp H_{\Tn}^*\P^{n-1}$ is a free $\Z[y]$-algebra.  Letting $\tilde{p}_k(\xi|y)$ be the polynomials defined by \eqref{e.phydef}, where $\tilde{p}_k=p_k(\xi)=\xi_1^k+\xi_2^k+\cdots$, it has the presentation
\[
  \Sgp H_{\Tn}^*\P^{n-1} = \LambdaXi[y]/\big( \tilde{p}_k(\xi|y) \big)_{k\geq n}.
\]
\end{lemma}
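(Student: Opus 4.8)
The plan is to compute the ring $T := \Sgp H_\Tn^*\P^{n-1}$ directly as a free $\Z[y]$-module, exhibit the claimed relations, and then match the two sides. First I would record the module structure. By \eqref{e.HTPn}, $H_\Tn^*\P^{n-1}$ is free over $\Z[y]$ with basis $1,\xi,\dots,\xi^{n-1}$ and is concentrated in even degrees, so by the Künneth formula quoted before the lemma $H_\Tn^*(\P^{n-1})^{\times r}$ is free over $\Z[y]$ with basis the monomials $\xi_1^{a_1}\cdots\xi_r^{a_r}$, $0\le a_i\le n-1$. The symmetric group $\Sgp_r$ permutes this basis, so the invariants are free over $\Z[y]$ on the orbit sums $m_\lambda(\xi_1,\dots,\xi_r)$ with $\lambda_1\le n-1$ and $\ell(\lambda)\le r$. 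At the basepoint the class $\xi=c_1^\Tn(\O(1))+y_0$ restricts to $0$ (the fibre of $\O(1)$ over $[\ee_0]$ carries the character $-y_0$), so the transition map $H_\Tn^*(\P^{n-1})^{\times(r+1)}\to H_\Tn^*(\P^{n-1})^{\times r}$ sends $\xi_{r+1}\mapsto 0$ and hence $m_\lambda\mapsto m_\lambda$, dropping those with $\ell(\lambda)=r+1$. Passing to the limit, $T$ is free over $\Z[y]$ with basis $\{m_\lambda(\xi):\lambda_1\le n-1\}$.

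Next I would check that the elements $\tilde p_k(\xi|y)$ vanish in $T$ for $k\ge n$, which it suffices to verify in each $H_\Tn^*(\P^{n-1})^{\times r}$. Writing $e_j$ for $e_j(y_1-y_0,\dots,y_n-y_0)$ and $\Phi_k(u):=\sum_{i=1}^k u^i e_{k-i}$, the definition \eqref{e.phydef} together with $p_i(\xi)=\sum_j\xi_j^i$ gives $\tilde p_k(\xi|y)=\sum_j\Phi_k(\xi_j)$, a sum of single-variable contributions. Now $\Phi_n(u)$ is exactly the defining relation \eqref{e.HTPn2}, so $\Phi_n(\xi_j)=0$; and a one-line telescoping gives $\Phi_k(u)-u\,\Phi_{k-1}(u)=e_{k-1}\,u$, which vanishes once $k-1\ge n$ because $e_{k-1}=0$ (the tuple $y_1-y_0,\dots,y_n-y_0$ has the zero entry $y_n-y_0$, so its elementary symmetric functions vanish above degree $n-1$). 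Hence $\Phi_k(u)=u^{k-n}\Phi_n(u)$ for $k\ge n$, so $\Phi_k(\xi_j)=0$ and therefore $\tilde p_k(\xi|y)=0$ in $T$. This produces a $\Z[y]$-algebra homomorphism $\psi\colon S:=\LambdaXi[y]/I\to T$, where $I:=\big(\tilde p_k(\xi|y)\big)_{k\ge n}$, sending $m_\lambda(\xi)\mapsto m_\lambda(\xi)$; it is surjective because the basis of $T$ found above lies in its image.

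It remains to see that $\psi$ is injective, and here I would invoke Corollary \ref{c.sympn} through a graded Nakayama argument rather than a direct combinatorial reduction. Reducing the source modulo the irrelevant ideal $(y_1,\dots,y_n)$ specializes $\tilde p_k(\xi|y)$ to $p_k(\xi)$, so $S\otimes_{\Z[y]}\Z=\LambdaXi/\big(p_k(\xi)\big)_{k\ge n}$, which by Corollary \ref{c.sympn} is free over $\Z$ with basis the classes of $m_\lambda(\xi)$, $\lambda_1\le n-1$. Since everything is non-negatively graded and finitely generated in each degree, graded Nakayama lifts this to the statement that $\{m_\lambda(\xi):\lambda_1\le n-1\}$ generates $S$ as a $\Z[y]$-module. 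Because $\psi$ carries these generators to the $\Z[y]$-basis $\{m_\lambda(\xi):\lambda_1\le n-1\}$ of the free module $T$, any element of $\ker\psi$, written as a $\Z[y]$-combination of the generators, maps to a relation among basis elements of $T$ and is therefore $0$; thus $\psi$ is injective, hence an isomorphism, and $S$ is itself free over $\Z[y]$ on these monomial symmetric functions.

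The main obstacle is the injectivity, that is, ruling out extra relations in the quotient, and the key device for handling it cleanly is the reduction modulo $y$ to the non-equivariant Corollary \ref{c.sympn}: this replaces the delicate task of expressing each $m_\mu$ with $\mu_1\ge n$ modulo $I$ in terms of smaller monomials with $y$-dependent coefficients by a single application of graded Nakayama. One should also take care that the transition maps in the inverse limit are the expected ones, for which the computation $\xi|_{p_0}=0$ is the relevant point.
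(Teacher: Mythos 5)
Your proof is correct and follows essentially the same route as the paper's: define the map $\LambdaXi[y]\to \Sgp H_{\Tn}^*\P^{n-1}$, check that the relations $\tilde{p}_k(\xi|y)=0$ hold because they symmetrize the single-factor relation \eqref{e.HTPn2}, and conclude by reducing modulo $(y)$ to Corollary~\ref{c.sympn} and applying graded Nakayama. Your explicit identity $\Phi_k(u)=u^{k-n}\Phi_n(u)$ and the spelled-out $\Z[y]$-basis of the inverse limit simply make precise the steps the paper leaves terse.
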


\begin{proof}
The homomorphism $\LambdaXi[y] \to \Sgp H_{\Tn}^*\P^{n-1}$ is the limit of homomorphisms $\Z[\xi_1,\ldots,\xi_r]^{\Sgp_r} \to \big(H_{\Tn}(\P^{n-1})^{\times r} \big)^{\Sgp_r}$ defined by $\xi_i\mapsto [H_i]$.  
The relations $\tilde{p}_k(\xi|y)=0$ hold in $\Sgp H_{\Tn}^*\P^{n-1}$, because they symmetrize relations of the form \eqref{e.HTPn2}, so there is a well-defined homomorphism modulo the ideal $\big( \tilde{p}_k(\xi|y) \big)_{k\geq n}$.  Modulo the $y$-variables, this reduces to the isomorphism described in Corollary~\ref{c.sympn}.  The assertion follows by graded Nakayama.
\end{proof}

One embeds $\P(V_{[0,n-1]})$ in $\sGr$ by sending $L \subset V_{[0,n-1]}$ to $V_{<0}\oplus L \subset V$, and this embedding factors through $\aGr_n$, all $\Tn$-equivariantly.  So we have homomorphisms
\begin{equation}\label{e.homs}
\Lambda[y] = H_{\Tn}^*\sGr \to  H_{\Tn}^*\aGr_n \xrightarrow{f^*} \Sgp H_{\Tn}^*\P^{n-1}.
\end{equation}
The map $\Lambda[y]=H_\Tn^*\sGr \to H_\Tn^*\P^{n-1}$ sends the generating series
\[
  C(t)=c^\Tn(V_{\leq0}-\tS)\quad \text{ to } \quad c^\Tn(\C\cdot \ee_0 - \O(-1)) = \frac{1+y_0 t}{1-\xi t + y_0t}.
\]
The map $\Lambda[y] \to \Sgp H_{\Tn}^*\P^{n-1}$ is determined by the evaluation \eqref{e.c-eval}.

\begin{proposition}\label{p.bott}
The homomorphism $f^*\colon H_{\Tn}^*\aGr_n \to \Sgp H_{\Tn}^*\P^{n-1}$ is an isomorphism of $\Z[y]$-algebras.  In particular, we have
\[
  H_{\Tn}^*\aGr_n = \Lambda[y]/\big( \tilde{p}_k(h|y) \big)_{k\geq n}.
\]
\end{proposition}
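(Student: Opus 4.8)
The plan is to establish the proposition in two stages, exactly matching its two assertions. First I would prove that $f^*\colon H_{\Tn}^*\aGr_n \to \Sgp H_{\Tn}^*\P^{n-1}$ is an isomorphism, and then deduce the explicit presentation from the description of $\Sgp H_{\Tn}^*\P^{n-1}$ in Lemma~\ref{l.SHTP} together with Lemma~\ref{l.pc2ph}.

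\textbf{Isomorphism via Bott's method.} The map $f^*$ arises by factoring the pullback along the embeddings $\P^{n-1} \hookrightarrow \aGr_n$ through the $r$-fold coproduct on $H_{\Tn}^*\aGr_n$, as set up before the statement. To show it is an isomorphism of $\Z[y]$-algebras, I would first reduce modulo the $y$-variables, where the equivariant statement specializes to the non-equivariant one. Non-equivariantly, this is precisely Bott's theorem \cite{bott}: the based-loop-group / infinite-symmetric-power description identifies $H^*\aGr_n$ with $\Sgp H^*\P^{n-1}$, and $f^*$ is the comparison isomorphism. The key structural input is that both $H_{\Tn}^*\aGr_n$ and $\Sgp H_{\Tn}^*\P^{n-1}$ are free $\Z[y]$-modules (the latter by Lemma~\ref{l.SHTP}; for the former one uses freeness and vanishing of odd cohomology, which hold for $\aGr_n$), concentrated in even degrees, and that $f^*$ is a map of Hopf algebras intertwining the compatible coproducts recorded in the diagram in \S\ref{ss.chern}. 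Given freeness over $\Z[y]$ and an isomorphism after setting $y=0$, I would conclude by a graded Nakayama argument that $f^*$ is an isomorphism over $\Z[y]$.

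\textbf{Extracting the presentation.} Once $f^*$ is known to be an isomorphism, I would combine it with the presentation $\Sgp H_{\Tn}^*\P^{n-1} = \LambdaXi[y]/(\tilde{p}_k(\xi|y))_{k\geq n}$ from Lemma~\ref{l.SHTP}. The composite $\Lambda[y] = H_{\Tn}^*\sGr \to H_{\Tn}^*\aGr_n \xrightarrow{f^*} \Sgp H_{\Tn}^*\P^{n-1}$ is, by the displayed evaluation of $C(t)$ just before the statement, exactly the isomorphism $\Lambda[y] \xrightarrow{\sim} \LambdaXi[y]$ of the Remark, under which $h_k \mapsto h_k(\xi)$ and $\tilde{p}_k \mapsto p_k(\xi)$, hence $\tilde{p}_k(h|y) \mapsto \tilde{p}_k(\xi|y)$. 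Transporting the relations $(\tilde{p}_k(\xi|y))_{k\geq n}$ back along this isomorphism gives precisely $H_{\Tn}^*\aGr_n = \Lambda[y]/(\tilde{p}_k(h|y))_{k\geq n}$, which is the second assertion.

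\textbf{Main obstacle.} The genuine content lies in the first stage: justifying that $f^*$ is an isomorphism in the \emph{equivariant} setting. The non-equivariant input is Bott's, but I must ensure the equivariant coproduct on $H_{\Tn}^*\aGr_n$ (coming from the equivariant loop-group structure, \S\ref{ss.chern}) is genuinely the one through which $f^*$ factors, and that it is compatible with the direct-sum coproduct on $H_{\Tn}^*\sGr$ so that the composite is the algebra map of the Remark. The technical heart is therefore verifying the freeness and even-concentration hypotheses needed for graded Nakayama, and checking that reduction modulo $y$ recovers Bott's isomorphism rather than merely some surjection; the symmetric-function identities (Lemmas in \S\ref{s.Sym}) then do the remaining bookkeeping to convert $\tilde{p}_k(\xi|y)$-relations into the stated form.
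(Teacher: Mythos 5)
Your proposal is correct and follows essentially the same route as the paper: reduce modulo the $y$-variables using freeness of $H_{\Tn}^*\aGr_n$ over $\Z[y]$ (which the paper justifies via the $\Tn$-invariant Schubert cell decomposition), invoke Bott's non-equivariant isomorphism, conclude by graded Nakayama, and then read off the presentation from Lemma~\ref{l.SHTP} via the identification \eqref{e.c-eval}. The only detail worth making explicit is the source of the freeness hypothesis, namely the cell decomposition with finitely many cells in each dimension.
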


\begin{proof}
The affine Grassmannian has a $\Tn$-invariant Schubert cell decomposition, with finitely many cells in each dimension, so $H_{\Tn}^*\aGr_n$ is a free $\Z[y]$-module.  It follows that non-equivariant cohomology is recovered by setting $y$-variables to $0$: we have an isomorphism $(H_{\Tn}^*\aGr_n)/(y) \isom H^*\aGr_n$, and likewise $(\Sgp H_{\Tn}^*\P^{n-1})/(y) \isom \Sgp H^*\P^{n-1}$.  The induced map $H^*\aGr_n \to \Sgp H^*\P^{n-1}$ was shown to be an isomorphism by Bott \cite[Proposition~8.1]{bott}.  So the first statement of the proposition follows by another application of graded Nakayama.  The second statement is a combination of the first with the presentation of $\Sgp H_{\Tn}^*\P^{n-1}$ from Lemma~\ref{l.SHTP}.
\end{proof}

The $d=0$ case of the main theorem follows from Proposition~\ref{p.bott} together with the equality of ideals $\big( \tilde{p}_k(h|y) \big)_{k\geq n} = \big( p_k(c|y) \big)_{k\geq n}$ established in Lemma~\ref{l.pc2ph}.

For the general $d$ case, we use the shift morphism $\sh^d$, which defines isomorphisms
\[
\begin{tikzcd}
  \sGr^d \ar[r,"\sim"] & \sGr \\
  \aGr_n^d \ar[r,"\sim"] \ar[u,hook] & \aGr_n \ar[u,hook].
\end{tikzcd}
\]
These are equivariant with respect to the corresponding automorphism of $\Tn$ which cyclically permutes coordinates.  The action on cohomology rings is given by the homomorphism $\gamma^d$, as described in \S\ref{ss.chern}.  The presentation of $H_{\Tn}^*\aGr_n$ is mapped to
\[
  H_{\Tn}^*\aGr_n^d = \Lambda[y]/\big( \gamma^d p_k(c|y) \big)_{k\geq n},
\]
where now $\Lambda=\Z[c_1^{(d)},c_2^{(d)},\ldots]$, and the variables map by $c_k^{(d)} = c^{\Tn}_k(V_{\leq0}-\tS_d)$.  It remains to express $\gamma^d p_k(c|y)$ in terms of the polynomials $p_k(c^{(d)}|y)$.

Since $(\sh^d)^*c_k^{\Tn}(V_{\leq 0} - \tS_0) = c_k^{\Tn}(V_{\leq d} - \tS_d)$, we have
\[
\gamma^d( C(t) ) = C^{(d)}(t) \cdot (1+y_1 t) \cdots (1+ y_d t),
\]
where $C^{(d)}(t) = \sum_{k\geq 0} c_k^{(d)} t^k$ is the generating series.  So, using notation from \S\ref{s.Sym}, we have
\begin{align*}
 \gamma^d\bm{P}(t) &= \big(\gamma^d P(t) \big)\cdot \big(\gamma^d E(t) \big) \\
                   &= \left( \frac{d}{dt}\log \gamma^d C(t) \right) \cdot E(t) \\
                   &= \frac{d}{dt}\log \left( C^{(d)}(t)\prod_{i=1}^d (1+y_i t) \right) \cdot E(t) \\
                   &= P^{(d)}(t) \cdot E(t) + \sum_{i=1}^d y_i (1+y_1 t) \cdots \hat{(1+y_i t)} \cdots (1+y_n t),
\end{align*}
where $P^{(d)}(t) = \sum_{k\geq 1} p_k(c^{(d)}|y) t^{k-1}$.  Extracting the coefficients of $t^{k-1}$, we find
\begin{align*}
  \gamma^d p_n(c|y) &= p_n(c^{(d)}|y) + d\cdot e_n( y_1,\ldots, y_n )
  \intertext{and}
   \gamma^d p_k(c|y) &= p_k(c^{(d)}|y)
\end{align*}
for $k>n$, as claimed.

\begin{remark}
Consider the $\Z[y]$-algebra automorphism of $\Lambda[y]$ defined by sending $p_k(c)$ to $p_k(c) - (-1)^k p_k(y)$, where $p_k(y) = y_1^k+\cdots+y_n^k$.  Using \cite[(2.11')]{Mac1}, this sends
\[
  p_k(c|y) \mapsto p_k(c|y) + k\,e_k(y).
\]
So we have an isomorphism of $\Z[y]$-algebras $\Lambda[y]/I_n^d \xrightarrow{\sim} \Lambda[y]/I_n^{d+n}$.
\end{remark}

\section{Double monomial symmetric functions}\label{s.monomial}

The monomial symmetric functions $m_\lambda(\xi)$, with $\lambda_1<n$, form a basis for $\Sgp H_{\Tn}^*\P^{n-1}$ over $\Z[y]$---so they also form a basis for $H_{\Tn}^*\aGr_n$.  (This follows from the arguments above, and it is also easy to see directly from the fact that $1,\xi,\ldots,\xi^{n-1}$ forms a basis for $H_{\Tn}^*\P^{n-1}$ over $\Z[y]$.)  It is useful to work with a deformation of this basis of $\Lambda[y]$, which extends a basis for the defining ideal of $H_{\Tn}^*\aGr_n$.

For the general definition, we use variables $a_1,a_2,\ldots$ in degree $2$.  Given a sequence $\alpha=(\alpha_1,\ldots,\alpha_r)$ of positive integers, let $n_i(\alpha)$ be the number of occurrences of $i$ in $\alpha$, and set $n(\alpha) := n_1(\alpha)! n_2(\alpha)!\cdots$.  (So $n(\alpha)$ is the number of permutations fixing $\alpha$.)  For a partition $\lambda$ with $r$ parts, so $\lambda=(\lambda_1\geq \cdots \geq \lambda_r>0)$, we write $\alpha\subset \lambda$ to mean $\alpha_i\leq \lambda_i$ for all $i$.  Let
\[
  e_{\lambda-\alpha}(a) = e_{\lambda_1-\alpha_1}(a_1,\ldots,a_{\lambda_1-1})\cdots e_{\lambda_r-\alpha_r}(a_1,\ldots,a_{\lambda_r-1}),
\]
where $e_k$ is the elementary symmetric polynomial.

\begin{definition}
The \define{double monomial symmetric function} is
\[
  m_\lambda(\xi|a) = \sum_{(1^r) \subset \alpha \subset \lambda} \frac{n(\alpha)}{n(\lambda)} e_{\lambda-\alpha}(a)\, m_\alpha(\xi),
\]
an element of $\LambdaXi[a_1,a_2,\ldots]$.
\end{definition}

For a given $\alpha$, the coefficient $n(\alpha)/n(\lambda)$ need not be an integer, but in the sum over all $\alpha$, the coefficients are integers.  In fact, $m_\lambda(\xi|a)$ is the symmetrization of the ``monomial''
\begin{align}\label{e.xi-monomial}
  (\xi|a)^\lambda &= \prod_{i=1}^r \xi_i(\xi_i+a_1)\cdots(\xi_i+a_{\lambda_i-1}) \\
                  &= \sum_{(1^r)\subset \alpha \subset \lambda} e_{\lambda-\alpha}(a)\, \xi^\alpha, \nonumber
\end{align}
i.e., it is the sum of $\sigma\big((\xi|a)^{\lambda}\big)$ over all distinct permutations $\sigma$ of $\lambda$, where $\sigma$ acts in the usual way by permuting the $\xi$ variables.

For instance, the functions corresponding to $\lambda$ with a single row are
\[
  m_{k}(\xi|a) = m_{k}(\xi) + e_1(a_1,\ldots,a_{k-1})\,m_{k-1}(\xi) + \cdots + e_{k-1}(a_1,\ldots,a_{k-1})\, m_1(\xi).
\]
Other examples are:
\begin{align*}
 m_{21}(\xi|a) &= m_{21}(\xi) + 2a_1\,m_{11}(\xi), \\
 m_{22}(\xi|a) &= m_{22}(\xi) + a_1\, m_{21}(\xi) + a_1^2\, m_{11}(\xi), \\
 m_{31}(\xi|a) &= m_{31}(\xi) + (a_1+a_2)\, m_{21}(\xi) + 2 a_1 a_2\, m_{11}(\xi), \\
 m_{32}(\xi|a) &= m_{32}(\xi) + 2(a_1+a_2)\, m_{22}(\xi)  + a_1\, m_{31}(\xi)  \\
  & \qquad + a_1 (a_1+2a_2)\, m_{21}+ 2 a_1^2 a_2\, m_{11}(\xi).
\end{align*}

From now on, we evaluate the $a$ variables as $a_i=y_i-y_0$, with the indices taken mod $n$ as usual.  In the single-row case, this recovers the double power sum function defined by \eqref{e.phydef} in \S\ref{s.Sym} above: $m_k(\xi|a)=\tilde{p}_k(\xi|y)$.

We use the isomorphism $\LambdaXi[y] \isom \Lambda[y]$ from \eqref{e.c-eval} to identify the functions $m_\lambda(\xi|y)$ in $\LambdaXi[y]$ with elements $m_\lambda(c|y)$ in $\Lambda[y]$, also called double monomial functions.  

\begin{proposition}
The double monomial functions $m_\lambda(c|y)$ form a $\Z[y]$-linear basis for $\Lambda[y]$.  The $m_\lambda(c|y)$ with $\lambda_1\geq n$ form a $\Z[y]$-linear basis for the ideal $I_n \subset \Lambda[y]$, the kernel of the surjective homomorphism $\Lambda[y]=H_{\Tn}^*\sGr \to H_{\Tn}^*\aGr_n$.
\end{proposition}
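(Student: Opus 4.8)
The plan is to transport everything through the isomorphism $\Lambda[y]\isom\LambdaXi[y]$ of \eqref{e.c-eval}, under which $m_\lambda(c|y)$ becomes the double monomial $m_\lambda(\xi|a)$ with $a_i=y_i-y_0$, and under which $I_n$ becomes the kernel of the pullback $\LambdaXi[y]=H_\Tn^*\sGr\to H_\Tn^*\aGr_n$. By Proposition~\ref{p.bott} this last map may be computed as the evaluation $\LambdaXi[y]\to\Sgp H_\Tn^*\P^{n-1}$, $\xi_i\mapsto[H_i]$, so that $I_n$ is exactly the kernel of that evaluation.

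First I would establish the basis claim. Writing $m_\lambda(\xi|a)$ as the symmetrization of $(\xi|a)^\lambda$ gives
\[
  m_\lambda(\xi|a)=m_\lambda(\xi)+\sum_{\alpha\subsetneq\lambda}g_{\lambda\alpha}\,m_\alpha(\xi),
\]
where each $g_{\lambda\alpha}\in\Z[y]$ is homogeneous of positive degree $|\lambda|-|\alpha|$ (integrality being automatic, since $\{m_\alpha(\xi)\}$ is a $\Z$-basis of $\LambdaXi$) and every $\alpha\subsetneq\lambda$ has strictly smaller $\xi$-degree $|\alpha|<|\lambda|$. As $\{m_\alpha(\xi)\,y^\beta\}$ is a $\Z$-basis of each graded piece of $\LambdaXi[y]$, ordering by $\xi$-degree makes the transition to $\{m_\lambda(\xi|a)\,y^\beta\}$ a finite unitriangular integer matrix with $1$'s on the diagonal, hence invertible over $\Z$. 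Thus the $m_\lambda(c|y)$ form a $\Z[y]$-basis of $\Lambda[y]$.

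Next I would show $m_\lambda(c|y)\in I_n$ whenever $\lambda_1\ge n$. The image of $m_\lambda(\xi|a)$ under the evaluation is the symmetrization of the monomial $(\xi|a)^\lambda=\prod_i(\xi_i|a)^{\lambda_i}$ with $\xi_i=[H_i]$, and by the presentation \eqref{e.HTPn} each class satisfies $(\xi_i|a)^n=\xi_i(\xi_i+a_1)\cdots(\xi_i+a_{n-1})=0$. When $\lambda_1\ge n$ the factor $(\xi|a)^{\lambda_1}$ is divisible by $(\xi|a)^n$, so every monomial in the symmetrization maps to $0$; hence $m_\lambda(c|y)$ lies in the kernel $I_n$. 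I regard this geometric vanishing as the crux: the natural generators of $I_n$ are the single-row functions $m_{(k)}(c|y)$, and expressing a general $m_\lambda(c|y)$ with $\lambda_1\ge n$ in terms of them directly is delicate over $\Z$ (the analogous power-sum manipulation requires dividing by multiplicities of repeated parts of $\lambda$), whereas reading off membership from the relation $(\xi|a)^n=0$ avoids this entirely.

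Finally I would assemble the two halves. Let $M$ and $N$ be the $\Z[y]$-spans of the $m_\lambda(c|y)$ with $\lambda_1<n$ and with $\lambda_1\ge n$ respectively, so that $\Lambda[y]=M\oplus N$ by the basis claim. Starting from the known $\Z[y]$-basis $\{m_\lambda(\xi):\lambda_1<n\}$ of $\Sgp H_\Tn^*\P^{n-1}$, the same unitriangular argument---now valid because $\alpha\subsetneq\lambda$ forces $\alpha_1\le\lambda_1<n$, keeping every correction term inside this basis---shows that the images of the $m_\lambda(c|y)$ with $\lambda_1<n$ form a $\Z[y]$-basis of $H_\Tn^*\aGr_n$. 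Hence the composite $M\hookrightarrow\Lambda[y]\tto H_\Tn^*\aGr_n$ is an isomorphism, i.e.\ $\Lambda[y]=M\oplus I_n$. Comparing this with $\Lambda[y]=M\oplus N$ and using $N\subseteq I_n$ from the previous step forces $N=I_n$: given $x\in I_n$, write $x=u+v$ with $u\in N\subseteq I_n$ and $v\in M$; then $v\in I_n\cap M=0$, so $x\in N$. As the $m_\lambda(c|y)$ with $\lambda_1\ge n$ form part of the global $\Z[y]$-basis and span $I_n$, they are a $\Z[y]$-basis of $I_n$. Beyond the membership step, the only real work is the bookkeeping of these graded unitriangular change-of-basis matrices.
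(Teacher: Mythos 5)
Your proof is correct and follows essentially the same route as the paper: the basis claim via the unitriangular (equivalently, set $y=0$) comparison with the $m_\lambda(\xi)$, membership of $m_\lambda(c|y)$ for $\lambda_1\geq n$ in $I_n$ via the vanishing of $(\xi|a)^\lambda$ in $H_{\Tn}^*(\P^{n-1})^{\times r}$, and a rank/direct-sum comparison to conclude. The only difference is that you spell out explicitly the final step ($\Lambda[y]=M\oplus N$ with $N\subseteq I_n$ and $M\xrightarrow{\sim}H_{\Tn}^*\aGr_n$ forces $N=I_n$), which the paper compresses into ``it suffices to check that each $m_\lambda(c|y)$ lies in the ideal.''
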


In particular, every class in $H_{\Tn}^*\aGr_n$ has a canonical lift to a polynomial in $\Lambda[y]$, by taking an expansion in the monomial basis as a normal form, using only those $m_\lambda(c|y)$ with $\lambda_1<n$.

\begin{proof}
The first statement is proved by setting $y=0$, since the monomial functions $m_\lambda$ form a basis for $\Lambda$.  For the second statement, it suffices to check that each $m_\lambda(c|y)$ lies in the ideal.  This follows from the characterization of $m_\lambda(\xi|a)$ as the symmetrization of the monomial $(\xi|a)^\lambda$ defined in \eqref{e.xi-monomial}.  Indeed, after setting $\xi_i = [H_i]$ and $a_i = y_i-y_0$, as in \S\ref{s.bott}, each $(\xi|a)^\lambda$ with $\lambda_1\geq n$ lies in the ideal defining $H_{\Tn}^*(\P^{n-1})^{\times r}$, so the symmetrization lies in the defining ideal of $\Sgp H_{\Tn}^*\P^{n-1}$.
\end{proof}

\begin{remark}
Up to sign and reindexing variables, the single-row functions $m_{k}(\xi|a)$ are essentially the same as the functions $\tilde{m}_k(x|\!|a)$ in \cite[\S4.5]{lam-shimozono}.  (To make the identification, use an isomorphism of our $\LambdaXi[a]$ with their $\Lambda(x|\!|a)$ which sends $m_k(\xi) \mapsto m_k[x-a_{>0}]$ and $a_i \mapsto -a_{1-i}$.  Then the image of our $m_k(\xi|a)$ is the result of setting $a_1=0$ in $\tilde{m}_k(x|\!|a)$.)  In general, however, the double monomial functions defined here differ from those of \cite{lam-shimozono}, which are more analogous to power-sum functions.  For instance, the latter are a basis only over $\Q[a]$.

The $m_\lambda(\xi|a)$ are closer to the double monomial functions $m_\lambda(x|\!|a)$ introduced by Molev \cite[\S5]{molev}, which are defined non-explicitly via Hopf algebra duality, but do form a basis over $\Z[a]$.  They are not quite identical, as can be seen from the table in \cite[\S8.1]{lam-shimozono}, but in small examples the image of our $m_\lambda(\xi|a)$ under the substitution $a_i\mapsto -a_{1-i}$ agrees with the result of setting $a_1=0$ in Molev's function $m_\lambda(x|\!|a)$.  
It would be interesting to know if this pattern persists.
\end{remark}

\section{Moduli of vector bundles}\label{s.VBs}

The affine Grassmannian $\aGr_n^d$ is homotopy-equivalent to the moduli stack parameterizing rank-$n$, degree $d$ vector bundles on $\P^1$ together with a trivialization at $\infty$.  Forgetting the trivialization identifies the moduli stack of vector bundles on $\P^1$ with the quotient stack $[GL_n\backslash\aGr_n^d]$.  (See, e.g., \cite{larson} for constructions of the moduli stacks, as well as further references, and \cite[\S4]{zhu} for a careful exposition of the relation between moduli of bundles and affine Grassmannians.)

Larson gave an algebraic description of the Chow ring of the moduli stack $\mathcal{B}^\dagger_{n,d}$ of rank $n$, degree $d$ vector bundles on $\P^1$, as a certain subring of a polynomial ring \cite{larson}.  In our context, the Chow and singular cohomology rings are isomorphic, and it follows from the above considerations that this ring must be isomorphic to the equivariant cohomology ring $H_{GL_n}^*\aGr_n^d$.  Here we will show that Larson's description is equivalent to the presentation given above in Corollary~A, using some basic identities of symmetric functions.

Consider the polynomial ring $\Q[e_1,\ldots,e_n,\bq_1,\ldots,\bq_{n-1}]$, with $e_i$ and $\bq_i$ in degree $2i$.  Larson shows that $H^*\mathcal{B}^\dagger_{n,d} = H_{GL_n}^*\aGr_n^d$ is isomorphic to the subring generated over $\Z[e_1,\ldots,e_n]$ by the coefficients of a series $\bar{C}(t) = \sum_{k\geq 0} \bar{c}_k\, t^k$, defined by
\begin{equation}\label{e.c-bar}
  \exp\left( \int \frac{-d(e_1+e_2 t+ \cdots + e_n t^{n-1}) + (\bq_1+\bq_2\, t + \cdots + \bq_{n-1}\, t^{n-2})}{1 + e_1 t + \cdots + e_n t^{n}}\,dt \right).
\end{equation}
(To compare with Larson's notation, our $\bar{c}_i$ is her $e_i$, our $e_i$ is her $a_i$, and our $\bq_i$ is her $-a'_{i+1}$.)

\begin{proposition}
The ideal $J_n^d$ is the kernel of the $\Z[e_1,\ldots,e_n]$-algebra homomorphism $\Lambda[e_1,\ldots,e_n] \to \Q[e_1,\ldots,e_n,\bq_1,\ldots,\bq_{n-1}]$ which sends $c_k$ to $\bar{c}_k$.  In particular, the $\Z[e_1,\ldots,e_n]$-subalgebra of $\Q[e_1,\ldots,e_n,\bq_1,\ldots,\bq_{n-1}]$ generated by the $\bar{c}_k$ is isomorphic to $\Lambda[e_1,\ldots,e_n]/J_n^d \isom H_{GL_n}^*\aGr_n^d$.
\end{proposition}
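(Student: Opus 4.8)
The plan is to prove the stated equality of ideals, $\ker\phi = J_n^d$, where $\phi\colon\Lambda[e_1,\ldots,e_n]\to\Q[e_1,\ldots,e_n,\bq_1,\ldots,\bq_{n-1}]$ is the $\Z[e]$-algebra homomorphism with $\phi(c_k)=\bar c_k$. The ``In particular'' clause is then immediate: the subalgebra in question is the image of $\phi$, hence isomorphic to $\Lambda[e]/\ker\phi=\Lambda[e]/J_n^d$, and the latter is $H_{GL_n}^*\aGr_n^d$ by Corollary~A. The inclusion $J_n^d\subseteq\ker\phi$ needs only the images of the generators, which I would read off from a generating-series computation.

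Set $E(t)=1+e_1t+\cdots+e_nt^n$ and let $N(t)$ denote the numerator in \eqref{e.c-bar}, so that \eqref{e.c-bar} reads $\frac{d}{dt}\log\bar C(t)=N(t)/E(t)$. Since $\phi(C(t))=\bar C(t)$ and $\phi$ fixes $E(t)$, applying $\phi$ to \eqref{e.plog} gives $\phi(P(t))=N(t)/E(t)$, whence $\phi(\bm{P}(t))=\phi(P(t))\,E(t)=N(t)$. As $\bm{P}(t)=\sum_{k\geq1}p_k(c|e)\,t^{k-1}$, extracting the coefficient of $t^{k-1}$ yields $\phi(p_k(c|e))=\bq_k-d\,e_k$ for $1\leq k\leq n-1$, $\phi(p_n(c|e))=-d\,e_n$, and $\phi(p_k(c|e))=0$ for $k>n$. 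Thus $\phi$ annihilates $p_k(c|e)$ for $k>n$ as well as $p_n(c|e)+d\,e_n$, so $J_n^d\subseteq\ker\phi$.

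For the reverse inclusion I would first argue rationally. Replacing the generators $c_k$ of $\Lambda\otimes\Q$ by the power sums $p_k$ via the Newton relations, the generators of $J_n^d$ become $p_k+p_{k-1}e_1+\cdots+p_{k-n}e_n$ for $k>n$ together with $p_n+p_{n-1}e_1+\cdots+p_1e_{n-1}+d\,e_n$. Setting these to zero expresses each $p_k$ with $k\geq n$ recursively in terms of $p_1,\ldots,p_{n-1}$ and $e_1,\ldots,e_n$, so $(\Lambda[e]/J_n^d)\otimes\Q\cong\Q[p_1,\ldots,p_{n-1},e_1,\ldots,e_n]$, a polynomial ring in $2n-1$ variables. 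On this quotient $\phi$ acts by $p_k\mapsto\bq_k-d\,e_k$ and $e_k\mapsto e_k$, which is a triangular, hence invertible, change of coordinates onto $\Q[e,\bq]$. Therefore the induced map is an isomorphism and $(\ker\phi)\otimes\Q=J_n^d\otimes\Q$.

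It remains to descend to $\Z$. By flatness of $\Q$ over $\Z$, the previous step gives $(\ker\phi)\otimes\Q=J_n^d\otimes\Q$, so $\ker\phi/J_n^d$ is $\Z$-torsion; but it sits inside $\Lambda[e]/J_n^d\cong H_{GL_n}^*\aGr_n^d$, which is free over $\Z[e_1,\ldots,e_n]$ and in particular $\Z$-torsion-free. Hence $\ker\phi=J_n^d$. The main input is the generating-series identity of the second paragraph, which is precisely the ``basic identity of symmetric functions'' matching Larson's series $\bar C(t)$ to the generators of $J_n^d$. I expect the genuine obstacle to be the integral descent of the last step: the rational computation identifies the ideals only after $\otimes\Q$, and upgrading to an equality over $\Z$ relies essentially on the $\Z[e]$-freeness of the cohomology ring supplied by Corollary~A.
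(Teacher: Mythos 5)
Your proof is correct and follows essentially the same route as the paper: both arguments reduce to the generating-series identity $\phi(p_k(c|e)) = \bq_k - d\,e_k$ (equivalently, $\bq_k = p_k(c|e)+d\,e_k$ in the paper's formulation with all $\bq_k$ present), which immediately gives $J_n^d\subseteq\ker\phi$ and identifies the rational kernel. Your final step---descending from $\ker\phi\otimes\Q=J_n^d\otimes\Q$ to $\ker\phi=J_n^d$ via the $\Z$-torsion-freeness of $\Lambda[e]/J_n^d\cong H_{GL_n}^*\aGr_n^d$---is a point the paper leaves implicit, and making it explicit is a genuine improvement rather than a deviation.
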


\begin{proof}
Consider a generating series
\[
  \bQ(t) = \sum_{k>0} \bq_k t^{k-1},
\]
along with 
\begin{equation}\label{e.c-p}
 C(t) = \exp\left( \int \frac{-d(e_1+e_2 t+ \cdots + e_n t^{n-1}) + \bQ(t)}{E(t)}\,dt \right),
\end{equation}
where $E(t) = \sum_{k=0}^n e_k t^k$ as usual.  The coefficients $c_k$ are algebraically independent, so this formula defines an embedding $\Lambda[e_1,\ldots,e_n] \hookrightarrow \Q[e_1,\ldots,e_n,\bq_1,\bq_2,\ldots]$.  The elements $\bar{c}_k$ defined by \eqref{e.c-bar} are the images of $c_k$ under the projection
\[
   \Q[e_1,\ldots,e_n,\bq_1,\bq_2,\ldots] \to \Q[e_1,\ldots,e_n,\bq_1,\ldots,\bq_{n-1}]
\]
which sets $\bq_k$ to $0$ for $k\geq n$.  So it suffices to identify these $\bq_k$ with the generators of $J_n^d$.

Rewriting the expression \eqref{e.c-p}, we find
\[
  t\,\bQ(t) = t\, P(t) \, E(t) + d\big( E(t)-1\big ),
\]
where the series $P(t) = \frac{d}{dt}\log C(t)$ is determined by the Newton relations, in the form given in \eqref{e.plog}.  Extracting the coefficient of $t^k$, we see $\bq_k = p_k(c|e)+d\, e_k$ for all $k\geq 1$.  In particular, $\bq_n = p_n(c|e)+ d\, e_n$, and $\bq_k = p_k(c|e)$ for $k>n$.
\end{proof}



\end{document}